\theoremstyle{plain}
\newtheorem{thm}{Theorem}[section]
\newtheorem{prop}[thm]{Proposition}
\newtheorem{lemma}[thm]{Lemma}
\newtheorem{cor}[thm]{Corollary}
\theoremstyle{definition}
\newtheorem{defn}[thm]{Definition}
\newtheorem*{defn*}{Definition}
\newtheorem{example}[thm]{Example}
\newtheorem{examples}[thm]{Examples}
\theoremstyle{remark}
\newtheorem*{rmk*}{Remark}
\newtheorem{disc}[thm]{Discussion}
\newcommand{\field}[1]{\mathbbm{#1}}
\newcommand{\Z}{\field{Z}}
\newcommand{\ideal}[1]{\mathfrak{#1}}
\newcommand{\m}{\ideal{m}}
\newcommand{\n}{\ideal{n}}
\newcommand{\p}{\ideal{p}}
\newcommand{\q}{\ideal{q}}
\newcommand{\ffunc}[1]{\mathrm{#1}}
\newcommand{\func}[1]{\mathrm{#1} \,}
\newcommand{\Spec}{\func{Spec}}
\newcommand{\Ext}{\ffunc{Ext}}
\newcommand{\Tor}{\ffunc{Tor}}
\DeclareMathOperator{\Ass}{Ass}
\newcommand{\arrow}[1]{\stackrel{#1}{\rightarrow}}
\newcommand{\onto}{\twoheadrightarrow}
\newcommand{\into}{\hookrightarrow}
\newcommand{\ra}{\rightarrow}
\DeclareMathOperator{\ann}{ann}
\DeclareMathOperator{\Hom}{Hom}
\DeclareMathOperator{\Min}{Min}
\DeclareMathOperator{\Coass}{Coass}
\DeclareMathOperator{\Att}{Att}
\DeclareMathOperator{\Max}{Max}
\DeclareMathOperator{\Jac}{Jac}
\renewcommand{\phi}{\varphi}
\begin{document}
\title{Criteria for flatness and injectivity}

\author{Neil Epstein}
\address{Universit\"at Osnabr\"uck\\Institut f\"ur Mathematik\\49069 Osnabr\"uck\\Germany}
\email{nepstein@uni-osnabrueck.de}

\author{Yongwei Yao}
\address{
Department of Math and Statistics\\Georgia State University\\30 Pryor St., Atlanta, GA 30303}
\email{yyao@gsu.edu}
\thanks{The second author was partially supported by the National Science Foundation DMS-0700554}

\begin{abstract}
Let $R$ be a commutative Noetherian ring.  We give criteria for flatness of $R$-modules in terms of associated primes and torsion-freeness of certain tensor products.  This allows us to develop a criterion for regularity if $R$ has characteristic $p$, or more generally if it has a locally contracting endomorphism.  Dualizing, we give criteria for injectivity of $R$-modules in terms of coassociated primes and (h-)divisibility of certain $\Hom$-modules.  Along the way, we develop tools to achieve such a dual result.  These include a careful analysis of the notions of divisibility and h-divisibility (including a localization result), a theorem on coassociated primes across a $\Hom$-module base change, and a local criterion for injectivity.
\end{abstract}

\subjclass[2010]{Primary 13C11; Secondary 13C05}
\keywords{injective module, flat module, torsion-free module, divisible module, h-divisible module, associated prime, coassociated prime}

\date{December 26, 2010}
\maketitle

\section{Introduction}
The most important classes of modules over a commutative Noetherian ring $R$, from a homological point of view, are the projective, flat, and injective modules.  It is relatively easy to check whether a module is projective, via the well-known criterion that a module is projective if and only if it is locally free.  However, flatness and injectivity are much harder to determine.

It is well-known that an $R$-module $M$ is flat if and only if $\Tor_1^R(R/P, M)=0$ for all prime ideals $P$.  For special classes of modules, there are some criteria for flatness which are easier to check.  For example, a \emph{finitely generated} module is flat if and only if it is projective.  More generally, there is the following \emph{Local Flatness Criterion}, stated here in slightly simplified form (see \cite[Section 22]{Mats} for a self-contained proof):

\begin{thm}[{\cite[10.2.2]{EGA-3.1}}]\label{thm:localflat}
Suppose that $R \ra S$ is a homomorphism of Noetherian rings, $I$ an ideal of $R$ such that $I S \subseteq \Jac(S)$ (the Jacobson radical of $S$), and $M$ is a finite $S$-module.  Then $M$ is flat over $R$ if and only if $M/IM$ is flat over $R/I$ and $\Tor_1^R(R/I,M) = 0$.
\end{thm}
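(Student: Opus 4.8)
The forward implication is the easy half: if $M$ is flat over $R$, then $M/IM \cong M \otimes_R R/I$ is flat over $R/I$ by base change, and $\Tor_1^R(R/I,M)=0$ because $\Tor_1^R(-,M)$ vanishes identically. So the plan is to prove the converse. Assume $M/IM$ is flat over $R/I$ and $\Tor_1^R(R/I,M)=0$. Recalling that $M$ is flat over $R$ exactly when $\Tor_1^R(R/\ia,M)=0$ for every ideal $\ia\subseteq R$ (all of which are finitely generated, since $R$ is Noetherian), the goal is to deduce that vanishing.

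The first step is to upgrade the hypothesis to an ``infinitesimal'' flatness statement: $\Tor_1^R(N,M)=0$ for every $R$-module $N$ annihilated by a power of $I$. The engine is a change-of-rings comparison. I claim $\Tor_1^R(R/I,M)=0$ forces a natural isomorphism $\Tor_1^R(N,M)\cong\Tor_1^{R/I}(N,M/IM)$ for every $R/I$-module $N$. Indeed, choose a short exact sequence $0\to N'\to F\to N\to 0$ of $R/I$-modules with $F$ free over $R/I$; since $\Tor_1^R(F,M)=0$ and $\Tor_1^{R/I}(F,M/IM)=0$, the two relevant long exact $\Tor$ sequences collapse to four-term exact sequences, which one identifies term by term using the canonical isomorphism $L\otimes_R M\cong L\otimes_{R/I}(M/IM)$ valid for any $R/I$-module $L$. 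As $M/IM$ is flat over $R/I$, this already gives $\Tor_1^R(N,M)=0$ for all $R/I$-modules $N$; and a d\'evissage along the filtration $N\supseteq IN\supseteq\cdots\supseteq I^n N=0$, whose successive quotients are $R/I$-modules, extends the vanishing to every $N$ killed by $I^n$. In particular, $\Tor_1^R(R/I^n,M)=0$ for all $n\ge 1$.

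The heart of the proof --- and the only place the hypotheses ``$IS\subseteq\Jac(S)$'' and ``$M$ a finite $S$-module'' are genuinely needed --- is the passage from this infinitesimal statement to actual flatness. Fix an ideal $\ia\subseteq R$ and set $T:=\Tor_1^R(R/\ia,M)=\ker(\ia\otimes_R M\to M)$, viewed as a submodule of $\ia\otimes_R M$. For each $n$, the module $R/(\ia+I^n)$ is killed by $I^n$, so $\Tor_1^R(R/(\ia+I^n),M)=0$ by the previous paragraph, i.e.\ $(\ia+I^n)\otimes_R M\to M$ is injective; a diagram chase through the Mayer--Vietoris sequence obtained by tensoring $0\to\ia\cap I^n\to\ia\oplus I^n\to\ia+I^n\to 0$ with $M$ then shows that $T$ lies in the image of $(\ia\cap I^n)\otimes_R M\to\ia\otimes_R M$. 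By the Artin--Rees lemma there is a constant $c$ with $\ia\cap I^n\subseteq I^{n-c}\ia$ for all $n\ge c$, and the image of $(I^{n-c}\ia)\otimes_R M$ in $\ia\otimes_R M$ is contained in $I^{n-c}(\ia\otimes_R M)$; hence $T\subseteq\bigcap_{m\ge 0}I^m(\ia\otimes_R M)$. Finally $\ia$ is finitely generated, so $\ia\otimes_R M$ is a finite $S$-module, and Krull's intersection theorem combined with $IS\subseteq\Jac(S)$ forces $\bigcap_m I^m(\ia\otimes_R M)=0$. Therefore $T=0$ for every ideal $\ia$, and $M$ is flat over $R$. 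The one step I expect to require real care is the Mayer--Vietoris/Artin--Rees bookkeeping just described; everything else is formal homological algebra.
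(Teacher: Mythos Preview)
The paper does not supply its own proof of this statement: Theorem~\ref{thm:localflat} is quoted as a known result from EGA, with a parenthetical pointer to \cite[Section 22]{Mats} for a self-contained argument. So there is nothing in the paper to compare your proposal against.

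That said, your proof is correct and is essentially the classical argument one finds in Matsumura. The forward direction is immediate. For the converse, your two-stage plan---first bootstrapping from $\Tor_1^R(R/I,M)=0$ and flatness of $M/IM$ to $\Tor_1^R(N,M)=0$ for every $N$ annihilated by a power of $I$, then using Artin--Rees and Krull's intersection theorem to kill $\Tor_1^R(R/\ia,M)$ for arbitrary $\ia$---is exactly the standard route. The change-of-rings comparison in your first step is fine: the key point, which you identify, is that $\Tor_1^R(F,M)=0$ for $F$ free over $R/I$ follows from the hypothesis $\Tor_1^R(R/I,M)=0$, after which the four-term exact sequences match. The d\'evissage along the $I$-adic filtration of $N$ is routine. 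In the second step your Mayer--Vietoris diagram chase is correct: if $t\in T=\ker(\ia\otimes_R M\to M)$, then its image in $(\ia+I^n)\otimes_R M$ vanishes (since that module injects into $M$), so $(t,0)$ lies in the image of $(\ia\cap I^n)\otimes_R M$, whence $t$ lies in the image of $(\ia\cap I^n)\otimes_R M\to\ia\otimes_R M$. Combined with Artin--Rees and the observation that $\ia\otimes_R M$ is a finite $S$-module (so that $IS\subseteq\Jac(S)$ lets Krull's intersection theorem apply), this gives $T=0$.

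Your own caveat about the bookkeeping in the second step is well placed, but the argument as written goes through.
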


For injectivity, there are very few extant tests.  For general modules, there is the Baer criterion, which says that an $R$-module $M$ is injective if and only if $\Ext^1_R(R/P,M)=0$ for all prime ideals $P$.  It is well-known that a \emph{torsion-free} module $M$ over an integral domain $R$ is injective if and only if it is divisible (and we prove a slightly more general version of this in Corollary~\ref{cor:tfdiv}).  But as most injective modules are not torsion-free, this criterion has limited usefulness.

In this work, we determine criteria for flatness and injectivity by checking torsion-freeness, associated primes, various sorts of divisibility, and coassociated primes, and then reducing to questions of flatness and injectivity over the total quotient ring.  In particular, for reduced rings, we get criteria for flatness in terms of torsion-freeness and associated primes, and criteria for injectivity in terms of (h-)divisibility.

We feel that our criteria for flatness are interesting even in the case where $M$ is a finitely generated $R$-module.  Indeed, the first hint of Theorem~\ref{thm:flatred} came when the authors \cite{nmeYao-HK} were investigating certain generalizations of Hilbert-Kunz multiplicity and needed a criterion to determine when certain direct summands of the $R$-module $R^{1/p^e}$ were free, where $R$ is a reduced $F$-finite Noetherian ring of prime characteristic $p>0$.  Another application of our results to prime characteristic algebra (or any other situation involving a \emph{locally contracting endomorphism}) may be found in Section~\ref{sec:reg}, culminating in Theorem~\ref{thm:frob}.

%We believe that many of the results of this paper should have counterparts over general (not necessarily Noetherian) commutative rings.   In order to state the corresponding results, one needs to use slightly different tools. For example, Bourbaki's ``weakly associated primes'' over general rings seem to be good candidates to replace associated primes over Noetherian rings. The authors are currently exploring in this direction.

Here is a description of the contents of the sections to follow.

In Section~\ref{sec:flat}, we recall a standard result about a condition on associated primes satisfied by flat modules, and in Theorems~\ref{thm:flatgeneral} and \ref{thm:flatred}, we show that in some sense this condition mostly characterizes flatness. Several characterizations for flatness there are given, using notions of associated primes, torsion-freeness, and certain tensor products.  We also give an example to show that the condition on associated primes is not enough to guarantee flatness for non-reduced rings.

As discussed above, Section~\ref{sec:reg} gives an application of Theorem~\ref{thm:flatred}.  Indeed, after globalizing the notion of a locally contracting endomorphism, we exhibit in Theorem~\ref{thm:frob} equivalent conditions for regularity of a reduced ring that has a locally contracting endomorphism (\emph{e.g.} any ring of characteristic $p$).

In Section~\ref{sec:div}, we discuss two non-equivalent notions of divisibility of a module: divisible and h-divisible modules.  We recall and demonstrate some dualities with the notion of torsion-freeness, and we show that both of the divisibility notions localize over a ring without embedded primes.

In Section~\ref{sec:Coass}, we recall the notions of attached and coassociated primes.  In Theorem~\ref{thm:CoassHom}, we demonstrate a strong dual (involving coassociated primes and Hom modules) to the standard result from Section~\ref{sec:flat} on associated primes and tensor product.

Our final section gives criteria for injectivity.  Theorem~\ref{thm:localinj} is a dual to the usual local flatness criterion.  Theorem~\ref{thm:injgeneral} gives criteria for injectivity of a module in terms of h-divisibility of Hom modules.  The final result of this paper, Theorem~\ref{thm:injred}, gives a long list of equivalent conditions for injectivity (and a few conditions for being an injective cogenerator) when a ring is reduced, making use of all the concepts discussed in the paper.

\section{Flatness criteria}\label{sec:flat}

First, recall the following standard result (in simplified form below).  Here we use the usual convention that $\Ass 0 = \emptyset$.

\begin{thm}[{\cite[Theorem 23.2]{Mats}}]\label{thm:matflat}
Let $R$ be a Noetherian ring, and let $L,M$ be $R$-modules, where $M$ is flat.  \begin{enumerate}
\item If $P \in \Spec R$ such that $M/PM \neq 0$, then $\Ass(M/PM) = \{P\}$.
\item $\Ass(L \otimes_R M) = \bigcup_{P \in \Ass L} \Ass(M/PM)$.
\end{enumerate}
\end{thm}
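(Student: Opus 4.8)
The plan is to prove (1) first, then derive (2) from it together with a short local computation.

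For (1): write $M/PM = M \otimes_R (R/P)$, which is a flat module over the Noetherian domain $R/P$. A nonzero flat module over a domain is torsion-free, and a nonzero torsion-free module $N$ over a Noetherian domain $D$ has $\Ass_D N = \{(0)\}$ --- each nonzero element has zero annihilator by torsion-freeness, so $(0)$ is the only candidate, and it occurs because $N \ne 0$ over a Noetherian ring. Pulling associated primes back along $R \onto R/P$ gives $\Ass_R(M/PM) = \{P\}$; this step is routine.

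For (2), the inclusion $\supseteq$ is immediate: if $P \in \Ass L$ then $R/P \into L$, and tensoring with the flat module $M$ gives $M/PM \into L \otimes_R M$, so $\Ass(M/PM) \subseteq \Ass(L \otimes_R M)$; taking the union over $P$ (and noting $\Ass(M/PM) = \emptyset$ if $M/PM = 0$) gives the claim. The content is $\subseteq$. First I would reformulate the target: by (1), $\bigcup_{P \in \Ass L}\Ass(M/PM) = \{P \in \Ass L : M/PM \ne 0\}$, so it suffices to show that $Q \in \Ass(L \otimes_R M)$ forces both $Q \in \Ass L$ and $M/QM \ne 0$. Both conditions may be tested after localizing at $Q$: associated primes commute with localization, and $M/QM \ne 0 \iff M_Q/QM_Q \ne 0$ because $M/QM$ is (by the torsion-freeness used in (1)) a torsion-free module over the domain $R/Q$, hence embeds into its localization at $(0)$, which is exactly $M_Q/QM_Q$. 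This reduces (2) to the following local statement: if $(A,\m)$ is Noetherian local with residue field $k$, $\bar M$ is a flat $A$-module, and $N$ is any $A$-module, then $\m \in \Ass_A(N \otimes_A \bar M)$ if and only if $\m \in \Ass_A N$ and $\bar M/\m\bar M \ne 0$.

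To prove the local statement, the key point is that $k = A/\m$ is finitely presented (as $\m$ is finitely generated), so for the flat module $\bar M$ the natural map $\Hom_A(k, N) \otimes_A \bar M \to \Hom_A(k, N \otimes_A \bar M)$ is an isomorphism: apply $\Hom_A(-, N)$ and $\Hom_A(-, N \otimes_A \bar M)$ to a finite presentation $A^a \to A^b \to k \to 0$ and compare, using exactness of $-\otimes_A\bar M$. Now $\Hom_A(k, X) = (0 :_X \m)$ for every $X$, and since $(0 :_N \m)$ is a $k$-vector space, $\Hom_A(k, N) \otimes_A \bar M \cong (0 :_N \m) \otimes_k (\bar M/\m\bar M)$, which is nonzero exactly when both factors are. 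Since $\m$ is maximal, $(0 :_X \m) \ne 0 \iff \m \in \Ass_A X$; combining the equivalences gives the local statement, and with (1) the full equality in (2).

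The step I expect to be the main obstacle is the careful passage between $R$ and $R_Q$ --- verifying that neither ``$Q \in \Ass$'' nor ``$M/QM \ne 0$'' is created or destroyed under localization, the latter being precisely where part (1)'s torsion-freeness is used --- together with correctly setting up the isomorphism $\Hom_A(k, N) \otimes_A \bar M \cong \Hom_A(k, N \otimes_A \bar M)$, which is where flatness of $M$ really enters (beyond its elementary use for $\supseteq$). An alternative for $\subseteq$ is more filtration-flavored: use flatness to reduce to $L$ finitely generated (if $L_0 \subseteq L$ is finitely generated, flatness gives $L_0 \otimes_R M \into L \otimes_R M$), take a prime filtration of $L$, tensor it with $M$ to filter $L \otimes_R M$ with successive quotients $M/P_jM$, and then localize at $Q$ and exhibit a non-zerodivisor on $L_Q$; the $\Hom$-based argument above is cleaner and disposes of the condition $M/QM \ne 0$ uniformly.
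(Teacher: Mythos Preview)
The paper does not actually prove this theorem: it is quoted as a standard result from \cite[Theorem 23.2]{Mats} and used as input for the rest of Section~\ref{sec:flat}, so there is no in-paper proof to compare against. Your argument is correct and is in the spirit of the cited source. Part~(1) is exactly the torsion-freeness observation one expects. For part~(2), your $\supseteq$ is the standard one-line computation; for $\subseteq$, your localization-at-$Q$ reduction is clean, and the key identity $\Hom_A(k,N)\otimes_A \bar M \cong \Hom_A(k,N\otimes_A \bar M)$ for $k$ finitely presented and $\bar M$ flat is precisely the right tool to detect $\m\in\Ass$ simultaneously on both sides. The justification that $M/QM \ne 0 \iff M_Q/QM_Q \ne 0$ via the torsion-freeness from (1) is a nice touch that ties the two parts together. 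The filtration alternative you mention is closer to how Matsumura actually argues, but your $\Hom$-based route is arguably slicker for handling the condition $M/QM\ne 0$.
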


For a ring $R$, we say that a module $M$ is \emph{torsion-free} if every zero-divisor on $M$ is a zero-divisor on $R$ (\emph{i.e.} every $R$-regular element is $M$-regular).  It is equivalent to say that the natural map $M \ra M\otimes_R Q$ is injective, where $Q$ is the total quotient ring of $R$.  For a collection $X$ of subsets of a set $S$, we use the notation $\bigcup X$ to mean the union of all the elements of $X$.  In particular, $X \subseteq 2^S$ but $\bigcup X \subseteq S$.

\begin{thm}\label{thm:flatgeneral}
Let $R$ be a Noetherian ring, and $M$ an $R$-module (not necessarily finitely generated).  Let $Q$ be the total quotient ring of $R$.  The following are equivalent: \begin{enumerate}[label=(\alph*)]
\item $M$ is flat.
\item $M \otimes_R Q$ is flat over $Q$, and $\Ass_R (L \otimes_R M) \subseteq \Ass_R L$ for every $R$-module $L$.
\item $M \otimes_R Q$ is flat over $Q$, and $L \otimes_R M$ is torsion-free for every torsion-free $R$-module $L$.
\item $M \otimes_R Q$ is flat over $Q$, and $P \otimes_R M$ is torsion-free for every $P \in \Spec R$.
\item $M \otimes_R Q$ is flat over $Q$, and $\Tor_1^R(R/P, M)$ is torsion-free for every $P \in \Spec R$.
\end{enumerate}
Suppose that $R \ra S$ is a homomorphism of Noetherian rings, $\m \in \Max(R)$, $\m S \subseteq \Jac(S)$, and $M$ is a finite $S$-module.  Then the above conditions are equivalent to: \begin{enumerate}
\item[(d$'$)] $M \otimes_R Q$ is flat over $Q$, and $\m \otimes_R M$ is $R$-torsion-free.
\item[(e$'$)] $M \otimes_R Q$ is flat over $Q$, and $\Tor_1^R(R/\m,M)$ is $R$-torsion-free.
\end{enumerate}
In any case, the following are equivalent: \begin{enumerate}[label=(\roman*)]
\item $M$ is faithfully flat.
\item $M \otimes_R Q$ is flat over $Q$, and $\Ass_R(L \otimes_R M) = \Ass_R L$ for every $R$-module $L$.
\item $M$ is flat and $\Ass_R(L \otimes_R M) = \Ass_R L$ whenever $L$ is a simple $R$-module.
\end{enumerate}
\end{thm}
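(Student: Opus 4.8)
The plan is to establish the cycle of implications (a)$\Rightarrow$(b)$\Rightarrow$(c)$\Rightarrow$(d)$\Rightarrow$(e)$\Rightarrow$(a), then to deal with (d$'$) and (e$'$) under the extra hypotheses, and finally with the faithfully flat equivalences (i)$\Rightarrow$(ii)$\Rightarrow$(iii)$\Rightarrow$(i). I will use freely the standard fact that over a Noetherian ring the set of zerodivisors on a module $N$ equals $\bigcup_{P\in\Ass_R N}P$, so that $N$ is torsion-free if and only if $\bigcup\Ass_R N\subseteq\bigcup\Ass_R R$. For (a)$\Rightarrow$(b): flatness is preserved under the base change to $Q$, and by Theorem~\ref{thm:matflat} one has $\Ass_R(L\otimes_R M)=\bigcup_{P\in\Ass_R L}\Ass_R(M/PM)$ where each $\Ass_R(M/PM)$ is $\{P\}$ or $\emptyset$, so the union lies in $\Ass_R L$. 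Then (b)$\Rightarrow$(c) is immediate from the zerodivisor description above; (c)$\Rightarrow$(d) holds because every ideal, being a submodule of $R$, is torsion-free; and (d)$\Rightarrow$(e) holds because $0\to P\to R\to R/P\to 0$ realizes $\Tor_1^R(R/P,M)$ as a submodule of $P\otimes_R M$, and a submodule of a torsion-free module is torsion-free. In each of these steps the conjunct ``$M\otimes_R Q$ is flat over $Q$'' simply carries along.

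The main point is (e)$\Rightarrow$(a). Flat base change along $R\to Q$ gives a natural isomorphism $\Tor_1^R(R/P,M)\otimes_R Q\cong\Tor_1^Q(Q/PQ,\,M\otimes_R Q)$, and the right-hand side vanishes since $M\otimes_R Q$ is flat over $Q$. Thus $\Tor_1^R(R/P,M)$ is torsion-free and becomes $0$ after tensoring with $Q$, hence is already $0$; as $P$ ranges over $\Spec R$, this gives flatness of $M$. The same observation handles (d$'$), (e$'$): the implications (d)$\Rightarrow$(d$'$) and (e)$\Rightarrow$(e$'$) are trivial, (d$'$)$\Rightarrow$(e$'$) repeats the proof of (d)$\Rightarrow$(e) with $\m$ in place of $P$, and for (e$'$)$\Rightarrow$(a) I would apply the Local Flatness Criterion (Theorem~\ref{thm:localflat}) with $I=\m$: since $R/\m$ is a field, $M/\m M$ is automatically flat over $R/\m$, so flatness of $M$ over $R$ reduces to $\Tor_1^R(R/\m,M)=0$, which again follows from the torsion-freeness-plus-base-change observation applied to $\m$.

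Finally, for the faithfully flat part: (i)$\Rightarrow$(ii) has one inclusion from (a)$\Rightarrow$(b), and for the other, $P\in\Ass_R L$ yields $R/P\hookrightarrow L$, hence $M/PM\hookrightarrow L\otimes_R M$ by flatness, with $M/PM\neq 0$ by faithful flatness, so $P\in\Ass_R(M/PM)\subseteq\Ass_R(L\otimes_R M)$ by Theorem~\ref{thm:matflat}. For (ii)$\Rightarrow$(iii): the stated equality of associated primes contains condition (b), so $M$ is flat by the first cycle, and the simple-module statement is a special case of (ii). For (iii)$\Rightarrow$(i): applying (iii) to $L=R/\m$ for each $\m\in\Max R$ gives $\Ass_R(M/\m M)=\{\m\}\neq\emptyset$, so $M\neq\m M$ for every maximal ideal, and a flat module with this property is faithfully flat. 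The only genuinely nontrivial ingredient in the whole argument is the $\Tor$-base-change observation driving (e)$\Rightarrow$(a) and (e$'$)$\Rightarrow$(a); everything else is routine manipulation on top of Theorems~\ref{thm:matflat} and~\ref{thm:localflat}.
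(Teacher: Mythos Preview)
Your argument is correct and follows essentially the same route as the paper's proof: the cycle (a)$\Rightarrow$(b)$\Rightarrow$(c)$\Rightarrow$(d)$\Rightarrow$(e)$\Rightarrow$(a), the reduction of (e$'$)$\Rightarrow$(a) to the Local Flatness Criterion, and the treatment of the faithfully-flat equivalences are all the same in substance. The only cosmetic difference is that for (e)$\Rightarrow$(a) you invoke the flat-base-change isomorphism $\Tor_1^R(R/P,M)\otimes_R Q\cong\Tor_1^Q(Q/PQ,M\otimes_R Q)$ directly, whereas the paper spells this out as an explicit diagram chase after tensoring the sequence $0\to P\to R\to R/P\to 0$ first with $M$ and then with $Q$.
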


\begin{proof}
(a) $\implies$ (b): This follows from the base-extension property of flat modules, along with Theorem~\ref{thm:matflat}.

(b) $\implies$ (c): Suppose $L$ is torsion-free.  Let $x$ be a zero-divisor on $L \otimes_R M$.  Then there exists $\p \in \Ass_R (L \otimes_R M)$ with $x \in \p$.  So $x \in \bigcup \Ass_R (L \otimes_R M) \subseteq \bigcup \Ass_R L \subseteq \bigcup \Ass R$.  That is, $x$ is a zero-divisor on $R$.  Thus, $L \otimes_R M$ is torsion-free.

(c) $\implies$ (d): Prime ideals are obviously torsion-free.

(d) $\implies$ (d$'$) and (e) $\implies$ (e'): Trivial.

(d) $\implies$ (e) (or (d$'$) $\implies$ (e$'$)): Fix $P \in \Spec R$ (or fix $P:=\m$).  Consider the following commutative diagram:\[
\begin{CD}
0 @>>> \Tor_1^R(R/P,M) @>d>> P\otimes M %@>d'>> M
\\
@. @VhVV                 @VgVV  %@VfVV
\\
0 @>>> \Tor_1^R(R/P,M) \otimes Q @>d\otimes 1_Q>> P\otimes M \otimes Q %@>d' \otimes 1_Q>> M \otimes Q
\end{CD}
\]
The top row is exact because $\Tor_1^R(R,M)=0$.  The fact that $P \otimes M$ is torsion-free implies that $g$ is injective.  Thus $g \circ d = (d \otimes 1_Q) \circ h$ is injective, whence $h$ is injective.

(e) (or (e$'$)) $\implies$ (a): To show $M$ is flat, it suffices to show
$\Tor_1^R(R/P, M) = 0$ for all $P \in \Spec(R)$ (or in the situation of (e$'$), the Local Flatness Criterion implies it is enough to do so when $P=\m$). Starting with the short exact sequence $0 \ra P \ra R \ra R/P \ra 0$, tensoring with $M$ gives us the exact sequence \[
0 \ra \Tor_1^R(R/P,M) \ra P \otimes M \ra R \otimes M
\]
Then tensoring with $Q$ gives us the exact sequence \[
\begin{CD}
0 \\
@VVV  \\
\Tor_1^R(R/P,M) \otimes Q \\
@Vd\otimes 1_QVV \\
P\otimes M \otimes Q @= (M \otimes Q) \otimes_Q (Q \otimes P) \\
@Vd' \otimes 1_QVV  @VeVV\\
M \otimes Q @= (M \otimes Q) \otimes_Q (Q \otimes R)
\end{CD}
\]
Since $M \otimes_R Q$ is flat over $Q$ and $Q$ is flat over $R$, the map $e$ (hence also the map $d' \otimes 1_Q$) is injective.  Thus, $ \Tor_1^R(R/P,M)\otimes Q=0$, but since $\Tor_1^R(R/P,M)$ is $R$-torsion-free, $\Tor_1^R(R/P,M)=0$.

Now we prove the equivalence of the conditions for faithful flatness:

(i) $\implies$ (ii): Since $M$ is faithfully flat, $M/\p M \cong R/\p \otimes_R M \neq 0$ for any $\p \in \Ass L$, so by Theorem~\ref{thm:matflat}, $\Ass(L \otimes_R M) = \Ass L$.

(ii) $\implies$ (iii): This follows because we know that (b) implies (a).

Finally, we show that (iii) $\implies$ (i): Let $N \neq 0$ be any $R$-module. Then $N$
contains a non-zero cyclic $R$-module which maps onto a simple
$R$-module, say $L$. By assumption, $\Ass(L \otimes_R M) = \Ass L \neq
\emptyset$, which implies $L \otimes_R M \neq 0$. As $M$ is flat, $L
\otimes_R M \neq 0$ forces $N \otimes_R M \neq 0$.
\end{proof}

We get a particularly nice statement in the case where $R$ is reduced.

\begin{thm}\label{thm:flatred}
Let $R$ be a reduced Noetherian ring, and let $M$ be an $R$-module.  The following are equivalent: \begin{enumerate}[label=(\alph*)] 
\item $M$ is flat.
\item $\Ass_R(L \otimes_R M) \subseteq \Ass_R L$ for every $R$-module $L$.
\item $L \otimes_R M$ is torsion-free for every torsion-free $R$-module $L$.
\item $P \otimes_R M$ is torsion-free for every $P \in \Spec R$.
\item $\Tor_1^R(R/P,M)$ is torsion-free for every $P \in \Spec R$.
\end{enumerate}
If, in addition, $R \ra S$ is a homomorphism of Noetherian rings, $\m \in \Max(R)$, $\m S \subseteq \Jac(S)$, and $M$ is a finite $S$-module, the above conditions are equivalent to: \begin{enumerate}
\item[(d$'$)] $\m \otimes_R M$ is torsion-free.
\item[(e$'$)] $\Tor_1^R(R/\m,M)$ is torsion-free.
\end{enumerate}
In any case, the following are equivalent: \begin{enumerate}[label=(\roman*)]
\item $M$ is faithfully flat.
\item $\Ass_R(L \otimes_R M) = \Ass_R L$ for every $R$-module $L$.
\item $M$ is flat and $\Ass_R(L \otimes_R M) = \Ass_R L$ whenever $L$ is a simple $R$-module.
\end{enumerate}
\end{thm}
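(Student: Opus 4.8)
The plan is to obtain Theorem~\ref{thm:flatred} as an immediate consequence of Theorem~\ref{thm:flatgeneral}, once we observe that for a reduced Noetherian ring $R$ the clause ``$M \otimes_R Q$ is flat over $Q$'' holds automatically and hence may be deleted from every condition in which it appears.

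First I would recall the structure of the total quotient ring $Q$ of a reduced Noetherian ring $R$. Since $R$ is reduced it has no embedded primes, so $\Ass R = \Min R = \{\p_1, \dots, \p_n\}$ is a finite set of minimal primes, and the set of nonzerodivisors of $R$ is exactly $R \setminus (\p_1 \cup \dots \cup \p_n)$. Thus $Q$ is a Noetherian ring of dimension $0$ whose maximal ideals are the $\p_i Q$, and each $Q_{\p_i Q} = R_{\p_i}$ is a field, because the localization of a reduced ring at a minimal prime is a field. Hence $Q$ is reduced and Artinian, so $Q \cong \prod_{i=1}^n R_{\p_i}$ is a finite product of fields. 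In particular $Q$ is von Neumann regular, so \emph{every} $Q$-module is flat; in particular $M \otimes_R Q$ is flat over $Q$ for every $R$-module $M$.

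Given this, each of conditions (b), (c), (d), (e), (d$'$), (e$'$) of the present theorem is obtained from the correspondingly labelled condition of Theorem~\ref{thm:flatgeneral} simply by omitting the (now vacuous) assertion that $M \otimes_R Q$ is flat over $Q$; condition (a) is unchanged; and likewise conditions (i), (ii), (iii) for faithful flatness match those of Theorem~\ref{thm:flatgeneral} after the same omission. The equivalences therefore transfer verbatim. There is no real obstacle beyond the standard identification of $Q$ with a finite product of fields; the entire content of the reduced case is precisely that this identification makes the total-quotient-ring hypothesis of Theorem~\ref{thm:flatgeneral} disappear.
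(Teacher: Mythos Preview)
Your proposal is correct and matches the paper's own proof almost verbatim: the paper simply observes that since $R$ is reduced, $Q$ is a finite product of fields, so every $Q$-module is flat, and then defers all implications to Theorem~\ref{thm:flatgeneral}. You supply more detail on why $Q$ is a product of fields, but the argument is identical in substance.
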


\begin{proof}
Since $R$ is reduced, $Q := Q(R)$ is a product of finitely many fields, and hence all $Q$-modules are flat over $Q$.  Thus all implications follow from Theorem~\ref{thm:flatgeneral}.
\end{proof}

Note that the assumptions on flatness over $Q$ in Theorem~\ref{thm:flatgeneral} cannot be omitted.

\begin{example}
Let $R := \Z/(4)$.  Then since $\Spec R = \{2R\}$ has only one element, every nonzero $R$-module $N$ satisfies $\Ass N = \Spec R$.  However, $M := R/(2)$ is not flat over $R$.  To see this, consider the canonical injection $j: 2R \hookrightarrow R$.  Note that $j \otimes_R 1_M$ is the zero map, even though $2R \otimes_R M \cong 2R \neq 0$, which means that $M$ is not $R$-flat.
\end{example}

\section{Regularity of local rings with a locally contracting endomorphism}\label{sec:reg}
We begin this section with a general fact about associated primes via restriction of scalars through a ring homomorphism.  It has been proved by Yassemi in greater generality as \cite[Corollary 1.7]{Ys-assfchg}, but we provide the following, simpler proof for the convenience of the reader.

\begin{prop}\label{pr:ass}
Let $f: R \ra S$ be a homomorphism of commutative rings, where $S$ is Noetherian.  Let $f^*: \Spec S \ra \Spec R$ be the associated map on spectra.  Let $M$ be an $S$-module.  Then $f^*(\Ass_SM) = \Ass_RM$.
\end{prop}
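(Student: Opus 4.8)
The plan is to prove the two inclusions $f^*(\Ass_S M) \subseteq \Ass_R M$ and $\Ass_R M \subseteq f^*(\Ass_S M)$ separately, using the characterization of associated primes via annihilators of elements and the correspondence between primes of $S$ and primes of $R$ lying under them. For the first inclusion, I would start with $\q \in \Ass_S M$, so that $\q = \ann_S(x)$ for some $x \in M$. Setting $\p := f^{-1}(\q) = f^*(\q)$, the claim is that $\p = \ann_R(x)$ where $x$ is now viewed as an element of $M$ as an $R$-module (i.e. $r \cdot x := f(r) x$). Indeed $r x = 0$ iff $f(r) x = 0$ iff $f(r) \in \q$ iff $r \in f^{-1}(\q) = \p$; so $\ann_R(x) = \p$, which is prime, hence $\p \in \Ass_R M$. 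This direction requires no Noetherian hypothesis.

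For the reverse inclusion $\Ass_R M \subseteq f^*(\Ass_S M)$, I would take $\p \in \Ass_R M$, so $\p = \ann_R(x)$ for some $x \in M$. Consider the $S$-submodule $Sx \subseteq M$; it is a nonzero finitely generated (cyclic) $S$-module, so since $S$ is Noetherian, $\Ass_S(Sx) \neq \emptyset$. Pick $\q \in \Ass_S(Sx) \subseteq \Ass_S M$. The heart of the argument is to show $f^{-1}(\q) = \p$. One inclusion is easy: for any $\q' \in \Ass_S(Sx)$, since every element of $Sx$ is killed by (the image in $S$ of) $\p$ — because $\p x = 0$ forces $\p \cdot (sx) = s(\p x) = 0$ for all $s \in S$ — we get $f(\p) \subseteq \ann_S(Sx) \subseteq \q'$, hence $\p \subseteq f^{-1}(\q')$. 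For the other inclusion $f^{-1}(\q) \subseteq \p$, I would argue that $f^{-1}(\q)$ annihilates some nonzero element of $Sx$, hence (since $\p = \ann_R x$ is the \emph{unique} associated prime? — no, one must be more careful here).

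The subtlety — and I expect this to be the main obstacle — is that $\ann_R(x)$ being prime does not immediately force $f^{-1}(\q) = \p$; a priori $f^{-1}(\q)$ could be a larger prime containing $\p$. To close the gap I would use a localization argument: it suffices to check the equality after localizing at $\p$. Replacing $R$ by $R_\p$, $S$ by $S_\p := S \otimes_R R_\p$, and $M$ by $M_\p$, we may assume $(R,\p)$ is local and $\p \in \Ass_R M$, i.e. there is $x \in M$ with $\ann_R x = \p$; note $x$ survives in $M_\p$ since $\p x = 0 \neq x$ and elements outside $\p$ act injectively on $Rx \cong R/\p$. Now among the finitely many primes in $\Ass_S(Sx)$, each contracts to a prime containing $\p$; but also $Sx$ has a nonzero element $x$ annihilated exactly by $\p$ over $R$, and I claim some associated prime $\q$ of $Sx$ contracts into $\p$, hence onto $\p$. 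Concretely: the element $x \in Sx$ generates an $R$-submodule $R x \cong R/\p$; choosing $\q \in \Ass_S(Sx)$ with $\q \supseteq \ann_S(Sx)$ minimal, or better, picking $\q \in \Ass_S(Sx)$ associated to a submodule meeting $Rx$ nontrivially, forces $f^{-1}(\q) \subseteq \{r : rx' = 0 \text{ for some } 0 \neq x' \in Rx\}$. Since $Rx \cong R/\p$ is a domain, any $r$ killing a nonzero element of it lies in $\p$, giving $f^{-1}(\q) \subseteq \p$ and hence equality. Combined with the easy inclusion and the first half of the proof, this yields $f^*(\Ass_S M) = \Ass_R M$.
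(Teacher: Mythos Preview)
Your proof of the inclusion $f^*(\Ass_S M) \subseteq \Ass_R M$ is fine and matches the paper. For the reverse inclusion you also take the right approach: pick $\p = \ann_R(x)$, look at the cyclic $S$-module $Sx$, choose $\q \in \Ass_S(Sx)$, and localize at $\p$ to reduce to the case where $(R,\p)$ is local. This is exactly what the paper does.

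The gap is in the very last step. Once you have localized so that $\p$ is the \emph{maximal} ideal of $R$, you are already done: $f^{-1}(\q)$ is a prime ideal of the local ring $(R,\p)$, hence automatically contained in $\p$, and you have already shown $\p \subseteq f^{-1}(\q)$. That is the entire argument in the paper. You state that ``each contracts to a prime containing $\p$'' but then fail to notice that in a local ring the only prime containing the maximal ideal is the maximal ideal itself. Instead you embark on an attempt to pick $\q$ ``associated to a submodule meeting $Rx$ nontrivially'' so as to force $f^{-1}(\q)$ to annihilate a nonzero element of $Rx \cong R/\p$. This step is not justified: an associated prime $\q$ of $Sx$ is the $S$-annihilator of some $y \in Sx$, but there is no reason $y$ can be chosen in the $R$-submodule $Rx$, and your phrase ``associated to a submodule meeting $Rx$ nontrivially'' has no standard meaning that would guarantee this. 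So the argument as written does not close, even though the correct one-line finish is sitting right next to it.
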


\begin{proof}
First let $\q \in \Ass_SM$.  That is, $\q = \ann_Sz$ for some $z\in M$. Then $f^*(\q) = \ann_Rz$, so that $f^*(\q) \in \Ass_RM$.  Thus, $f^*(\Ass_SM) \subseteq \Ass_RM$.

For the other inclusion, let $\p\in \Ass_RM$.  Then $\p = \ann_Rz$ for some $z\in M$.  Letting $W := R\setminus \p$ and localizing $R$, $S$ and $M$ at $W$, we may assume without loss of generality that $R$ is quasi-local with maximal ideal $\p$.  Now let $N := Sz \subseteq M$.  Clearly $N \neq 0$, so since $S$ is Noetherian, there is some $\q \in \Ass_SN \subseteq \Ass_SM$.  Then $\p \subseteq f^*(\q)$, but since $\p$ is maximal it follows that $\p = f^*(\q)$.  Thus, $f^*(\Ass_SM) \supseteq \Ass_RM$, which finishes the proof.
\end{proof}

\begin{disc}\label{dis}
Take any ring homomorphism $g: R \ra S$ and any $\q \in \Spec S$.  Composing further with the localization map $S \ra S_\q$, we get a ring homomorphism $R \ra S_\q$.  For $\p \in \Spec R$, this latter homomorphism extends to a map $g': R_\p \ra S_\q$, defined by $g'(a/x) = g(a)/g(x)$, \emph{if and only if} $g^{-1}(\q) \subseteq \p$.  Moreover, $g'$ is a \emph{local} homomorphism \emph{if and only if} $\p = g^{-1}(\q)$.  That is, each prime ideal $\q \in \Spec S$ induces a unique local homomorphism $g_\q: R_{g^{-1}(\q)} \ra S_\q$ given by $g_\q(a/x) = g(a)/g(x)$.  Hence in the case where $R=S$, we get an induced local endomorphism $g_\q: R_\q \ra R_\q$ if and only if $\phi^{-1}(\q)=\q$.
\end{disc}

We fix a ring $R$ and a ring endomorphism $\phi: R \ra R$.  For any $R$-module $M$, let ${}^eM$ denote the ($R$-$R$)-\emph{bimodule} which is isomorphic to $M$ as an abelian group, where the image of any $z\in M$ in ${}^eM$ is denoted by ${}^ez$, and whose bimodule structure is given by $a \cdot {}^ez \cdot b := {}^e(\phi^e(a)bz)$ for any $a,b \in R$ and any $z\in M$.  In other words, the \emph{right} module structure is the same as its original structure, but the \emph{left} module structure is via \emph{restriction of scalars} through the endomorphism $\phi^e$.  As a final piece of necessary notation, for any $R$-module $L$, let $F^e(L) := L \otimes_R {}^eR$, considered as a \emph{right} $R$-module.  That is, $F^e(L)$ is the (left) $R$-module whose module structure is that of the right $R$-module $L \otimes_R {}^eR$.  We call $F$ the functor \emph{induced by} $\phi$.

This notation agrees with many authors' notation in the case where $\phi$ is the Frobenius endomorphism.  Indeed, many formulas familiar to characteristic $p$ algebraists still hold, including $F^e(R/I) \cong R / \phi^e(I)R$.

Recall (\emph{e.g.} \cite{AIM-local}) that for a local ring $(R,\m)$, a ring endomorphism $\phi: R \ra R$ is \emph{contracting} if there exists a positive integer $n$ such that $\phi^n(\m) \subseteq \m^2$.  Globalizing this notion, we obtain:
\begin{defn}
Let $\phi: R \ra R$ be a ring endomorphism, and let $X \subseteq \Spec R$.  If for all $\p \in X$ we have an induced local homomorphism $\phi_\p: R_\p \ra R_\p$ (\emph{i.e.} $\phi^{-1}(\p)=\p$) which is contracting, then we say $\phi$ is \emph{locally contracting over $X$}.  If $X=\{\p\}$ for some $\p\in \Spec R$, we say $\phi$ is \emph{locally contracting at $\p$}.  If $X = \Spec R$, we say $\phi$ is \emph{locally contracting}.
\end{defn}

\begin{examples}
If $\phi$ is a contracting (\emph{resp.} locally contracting over $\Max R$) endomorphism, so is $\phi^n$ for any positive integer $n$.  We know of two main classes of locally contracting endomorphisms:
\begin{itemize}
\item Let $R$ be a ring of prime characteristic $p>0$.  Then the \emph{Frobenius endomorphism} $\phi: R \ra R$ defined by $a \mapsto a^p$ is a locally contracting endomorphism.  To see that it induces the identity map on spectra, let $\q\in \Spec R$.  Since $\phi(\q) \subseteq \q^{[p]} \subseteq \q$, we have $\q \subseteq \phi^{-1}(\q)$.  Conversely, suppose $a \in \phi^{-1}(\q)$.  Then $a^p \in \q$, but since $\q$ is a radical ideal, $a \in \q$.
\item Let $k$ be a field, let $X$ be an indeterminate over $k$, and let $R$ be either $k[X]_{(X)}$, $k[\![X]\!]$, or $k[X]/(X^t)$ for some integer $t>0$.  Let $n\geq 2$ be an integer and let $\phi: R \ra R$ be the unique $k$-algebra homomorphism that sends $X \mapsto X^n$.  Then $\phi$ is a locally contracting endomorphism.
\end{itemize}
\end{examples}

\begin{lemma}\label{lem:lcspec}
Let $\phi:R \ra R$ be a locally contracting endomorphism.  Then for all $\p \in \Spec R$, we have $\sqrt{\phi(\p)R} = \p$.  In particular, $\phi^e(\m)R$ is $\m$-primary for all maximal ideals $\m$ and all positive integers $e$.
\end{lemma}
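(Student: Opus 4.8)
The plan is to use only the part of the hypothesis asserting that $\phi$ induces the identity on $\Spec R$, i.e.\ that $\phi^{-1}(\p) = \p$ for every $\p \in \Spec R$; the ``contracting'' condition $\phi_\p^n(\p R_\p) \subseteq \p^2 R_\p$ plays no role here. With that in hand I would prove the two containments $\sqrt{\phi(\p)R} \subseteq \p$ and $\p \subseteq \sqrt{\phi(\p)R}$ separately.

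For the first containment: since $a \in \p = \phi^{-1}(\p)$ forces $\phi(a) \in \p$, we get $\phi(\p) \subseteq \p$, hence $\phi(\p)R \subseteq \p$, and therefore $\sqrt{\phi(\p)R} \subseteq \sqrt{\p} = \p$. For the second, I would invoke the standard description of $\sqrt{\phi(\p)R}$ as the intersection of all primes $\q \supseteq \phi(\p)R$: for any such $\q$, the inclusion $\phi(\p) \subseteq \q$ is equivalent to $\p \subseteq \phi^{-1}(\q)$, and $\phi^{-1}(\q) = \q$ by hypothesis, so $\p \subseteq \q$; as every prime in the intersection contains $\p$, so does the intersection. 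Together these give $\sqrt{\phi(\p)R} = \p$.

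For the ``in particular'' clause I would observe that $\phi^e$ again induces the identity on $\Spec R$ --- iterating $\phi^{-1}(\q) = \q$ gives $(\phi^e)^{-1}(\q) = \q$ --- so the argument above applies verbatim with $\phi^e$ in place of $\phi$, yielding $\sqrt{\phi^e(\m)R} = \m$. Since an ideal whose radical is a maximal ideal is primary (with that maximal ideal as its radical), $\phi^e(\m)R$ is $\m$-primary.

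I do not anticipate a genuine obstacle. The only points requiring care are the standard fact that the radical of an ideal equals the intersection of the primes containing it, and the bookkeeping of the equivalence $\phi(\p) \subseteq \q \iff \p \subseteq \phi^{-1}(\q)$; the one mild ``insight'' is that the contracting hypothesis is stronger than what this lemma actually needs.
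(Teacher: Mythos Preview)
Your proposal is correct and follows essentially the same approach as the paper: both proofs establish $\phi(\p)R \subseteq \p$ from $\phi^{-1}(\p)=\p$, then show that any prime $\q \supseteq \phi(\p)R$ satisfies $\p \subseteq \phi^{-1}(\q) = \q$, concluding that $\p$ is the unique minimal prime of $\phi(\p)R$. Your observation that only the identity-on-spectra part of the hypothesis is needed (not the contracting condition) is accurate and matches what the paper actually uses.
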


\begin{proof}
The second statement follows from the first, since $\phi^e$ is a locally contracting endomorphism.  So let $\p\in \Spec R$.  By Discussion~\ref{dis}, $\phi(\p)R \subseteq \p$.  On the other hand, take any $\q \in \Spec R$ such that $\phi(\p)R \subseteq \q$.  Then $\p \subseteq \phi^{-1}(\q) = \q$ (again by Discussion~\ref{dis}), which means that $\p$ is the unique minimal prime ideal of $\phi(\p)R$.  That is, $\p=\sqrt{\phi(\p)R}$, as was to be shown.
\end{proof}

E. Kunz proved the following remarkable theorem, showing once again how central the notion of regularity is in commutative algebra:

\begin{thm}[{\cite[Theorem 2.1]{Kunz-regp}}]\label{thm:Kunz}
Let $(R,\m)$ be a Noetherian local ring of prime characteristic $p>0$.  Let $\phi:R \ra R$ be the Frobenius endomorphism, and otherwise use notation as above.  If $R$ is regular, then ${}^eR$ is flat as a left $R$-module for all integers $e>0$.  Conversely, if there is some integer $e>0$ such that ${}^eR$ is flat as a left $R$-module, then $R$ is regular.\footnote{Kunz's original theorem was stated in terms of the flatness of $R$ over its subrings $R^{p^e}$, so he made the additional necessary assumption that $R$ was reduced.  However, in our context, the reducedness of $R$ follows from the assumption that ${}^eR$ is $R$-flat, and hence faithfully flat.  For let $x\in R$ such that $x^{p^e}=0$.  Then $(x) \otimes_R {}^eR = 0$ by flatness of the functor $(-) \otimes_R {}^eR$ applied to the exact sequence $0 \ra (x) \ra R \ra R/(x) \ra 0$, so $x=0$ by faithful flatness.  Thus $R$ is reduced.}
\end{thm}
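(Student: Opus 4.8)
The plan is to prove the two implications separately. First note that the footnote already yields more than it states: if ${}^eR$ is a flat left $R$-module then it is faithfully flat (being a nonzero flat module over the local ring $R$, with $F^e(R/\m)\cong R/\phi^e(\m)R\neq 0$ by Lemma~\ref{lem:lcspec}) and $R$ is reduced, so both facts are available in the harder direction.

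The implication ``$R$ regular $\implies$ ${}^eR$ flat for every $e>0$'' is the classical, routine one: reducing to the complete case one has $\hat R\cong K[[x_1,\dots,x_d]]$ by the Cohen structure theorem — here the characteristic-$p$, hence equicharacteristic, hypothesis enters — and for a basis $\{\lambda_\alpha\}$ of the residue field $K$ over $K^{p^e}$, the elements ${}^e(\lambda_\alpha x_1^{a_1}\cdots x_d^{a_d})$ with $0\le a_i<p^e$ form a free $\hat R$-basis of ${}^e\hat R$ (unique expansion of a power series as a $K^{p^e}[[x_1^{p^e},\dots,x_d^{p^e}]]$-linear combination of the monomials $\lambda_\alpha x^a$); thus ${}^e\hat R$ is free, hence flat, and one descends to $R$. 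It is a \emph{finite} free module precisely when $R$ is $F$-finite, but always free. I would not dwell on this direction.

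The substance is the converse, ``${}^eR$ flat for some $e>0$ $\implies$ $R$ regular,'' which is Kunz's theorem proper. Its engine is the following. Flatness of ${}^eR$ makes $F^e=(-)\otimes_R{}^eR$ exact, so applying $F^e$ to a minimal free resolution $F_\bullet$ of $R/\m$ produces a free resolution of $F^e(R/\m)\cong R/\phi^e(\m)R$; and since the entries of the differentials of $F_\bullet$ lie in $\m$, those of $F^e(F_\bullet)$ are their $\phi^e$-images and lie in $\phi^e(\m)R\subseteq\m$ (Lemma~\ref{lem:lcspec}), so $F^e(F_\bullet)$ is again \emph{minimal}. Hence $R/\phi^e(\m)R$ and $R/\m$ have equal $R$-Betti numbers; in particular $\operatorname{pd}_R(R/\phi^e(\m)R)=\operatorname{pd}_R(R/\m)$. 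That this common value is finite — equivalently, that $R$ is regular — is obtained by Kunz's argument: an induction on $\dim R$ (base case: a reduced Artinian local ring is a field), in which the localized hypothesis $({}^eR)_\p\cong{}^e(R_\p)$ disposes of the primes $\p\neq\m$ by the inductive hypothesis, and the remaining input at the closed point — using, e.g., the Auslander--Buchsbaum formula and the Peskine--Szpiro intersection theorem for the finite-length module $R/\phi^e(\m)R$ — is the delicate part.

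I expect this last point — extracting regularity \emph{at} $\m$, and not merely on the punctured spectrum, from the flatness of a single ${}^eR$ — to be the main obstacle; it is precisely the content of Kunz's theorem, and I would follow \cite[Theorem 2.1]{Kunz-regp} for it rather than attempt to reprove it. In the paper the result is simply quoted, and the footnote only records that, in the formulation via ${}^eR$, Kunz's reducedness assumption becomes automatic.
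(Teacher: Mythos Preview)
The paper does not prove this theorem; it is quoted from \cite{Kunz-regp}, with the footnote supplying only the observation that reducedness is automatic once ${}^eR$ is flat (hence faithfully flat). You recognize this at the end of your proposal, so in that sense there is nothing to compare: your write-up simply adds expository context that the paper omits.

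A few remarks on the sketch itself. The forward direction (regular $\Rightarrow$ ${}^eR$ flat via Cohen structure and an explicit free basis of ${}^e\hat R$) is correct and standard. For the converse, your observation that flatness of ${}^eR$ makes $F^e$ carry a minimal free resolution of $R/\m$ to one of $R/\m^{[p^e]}$, so that the two modules have identical Betti numbers, is correct and is indeed the engine of the modern homological proofs. However, what you then label ``Kunz's argument'' is not Kunz's argument: Kunz worked with lengths and multiplicities, showing $\ell(R/\m^{[q]})=q^{\dim R}$ and invoking a characterization of regularity due to Lech. The induction-on-dimension scheme you outline, together with the invocation of Auslander--Buchsbaum and an intersection theorem, is closer in spirit to later proofs (Herzog, Peskine--Szpiro, Roberts), and as written it has the circularity you yourself flag: Auslander--Buchsbaum and the intersection theorem both presuppose finite projective dimension, which is exactly what you are trying to establish. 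So your assessment that ``this is the main obstacle'' and that one should simply cite \cite{Kunz-regp} is the right call --- and it is precisely what the paper does.
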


Avramov, Iyengar, and Miller recently proved a broad generalization, of which we use the following special case:
\begin{thm}[{\cite[part of Theorem 13.3]{AIM-local}}]\label{thm:AIM}
Let $(R,\m)$ be a Noetherian local ring admitting a contracting endomorphism $\phi: R \ra R$.  Then ${}^eR$ is flat as a left $R$-module for some (\emph{resp.} all) $e>0$ if and only if $\phi(\m)R$ is $\m$-primary and $R$ is regular.
\end{thm}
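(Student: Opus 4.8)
The plan is to prove the two implications separately. The ``if'' direction is soft and yields the ``resp.\ all'' version for free, while the ``only if'' direction splits into an easy part --- that $\phi(\m)R$ is $\m$-primary --- and a hard part --- that $R$ is regular --- the latter being the Kunz-type content, where essentially all the difficulty lies.

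For ``if'', assume $R$ is regular and $\phi(\m)R$ is $\m$-primary. I would first observe that a contracting endomorphism is automatically local: if $\phi(\m)$ met the units, so would each $\phi^n(\m)$, contradicting $\phi^n(\m)\subseteq\m^2$. Hence $\phi^e(\m)\subseteq\phi(\m)\subseteq\m$ for every $e$, so $\phi^e(\m)R\subseteq\phi(\m)R$ is $\m$-primary as well. Since ${}^eR$ is flat over $R$ exactly when the ring map $\phi^e\colon R\to R$ is flat, and since this is a local homomorphism with regular source, regular (in particular Cohen--Macaulay) target, and $0$-dimensional closed fibre $R/\phi^e(\m)R$, the equality $\dim R=\dim R+\dim\!\bigl(R/\phi^e(\m)R\bigr)$ holds trivially, so ``miracle flatness'' (\cite[\S23]{Mats}) gives flatness of $\phi^e$.

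For ``only if'', assume ${}^eR$ is flat for some $e>0$, i.e.\ $\phi^e$ is a flat, hence faithfully flat, local homomorphism. The dimension formula for flat local homomorphisms (\cite[\S15]{Mats}) gives $\dim\!\bigl(R/\phi^e(\m)R\bigr)=0$, so $\phi^e(\m)R$, and therefore the larger ideal $\phi(\m)R$, is $\m$-primary; this disposes of the first half. For the regularity of $R$ I would first reduce to the case that $\phi$ itself is flat with $\phi(\m)\subseteq\m^2$: replacing $\phi$ by $\phi^{en}$ keeps it flat (composites of flat ring maps are flat) and, since $\phi^e$ is again contracting, makes $\phi^{en}(\m)\subseteq\m^2$ for suitable $n$; this is legitimate because the remaining conclusion no longer mentions $\phi$. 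Now let $F$ be the functor induced by $\phi$ and set $q:=\ell(R/\phi(\m)R)<\infty$. Flatness of $\phi$ makes $F$ exact, and since $F(R/\m)\cong R/\phi(\m)R$ has length $q$, we get $\ell(F(N))=q\,\ell(N)$ for every finite-length $N$; applying this with $N=R/\phi^{k-1}(\m)R$ and using $F\!\bigl(R/\phi^{k-1}(\m)R\bigr)\cong R/\phi^{k}(\m)R$ gives, by induction, $\ell(R/\phi^{k}(\m)R)=q^{k}$ for all $k$. Iterating $\phi(\m)\subseteq\m^2$ gives $\phi^{k}(\m)R\subseteq\m^{2^{k}}$, so $q^{k}\ge\ell(R/\m^{2^{k}})$, which by Hilbert--Samuel theory grows like a positive constant times $(2^{\dim R})^{k}$; comparing growth rates forces $q\ge 2^{\dim R}$.

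The main obstacle, as I see it, is upgrading that inequality to the sharp statement --- that $q$ takes its expected value and the relevant multiplicity is $1$ --- which is what actually forces $R$ to be regular. In the Frobenius case $\phi^{k}(\m)R=\m^{[P^{k}]}$ for a fixed power $P$ of $p$, and Monsky's theorem gives $\ell(R/\m^{[P^{k}]})=\eHK(R)\,P^{kd}+O(P^{k(d-1)})$ with $d=\dim R$; matching this against $q^{k}$ forces $q=P^{d}$ and $\eHK(R)=1$, whence $R$ is regular. (To invoke the last implication one reduces to the formally equidimensional case, which is harmless here: faithful flatness of ${}^eR$ forces $R$ to be reduced --- as in the footnote to Theorem~\ref{thm:Kunz} --- and one may then pass to the completion and to a top-dimensional minimal prime.) This is essentially Kunz's argument \cite{Kunz-regp}. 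For a general contracting $\phi$ there is no Hilbert--Kunz function to appeal to, so one instead runs the Avramov--Iyengar--Miller machinery \cite{AIM-local}: take a Cohen factorization of $\phi$, reduce to the complete case, and use the theory of complete-intersection dimension together with the New Intersection Theorem to show first that $R$ is Cohen--Macaulay and then, from flatness (not merely finite flat dimension) of ${}^eR$, that $R$ is regular. I expect reconstructing this last step to be by far the hardest part of the whole proof.
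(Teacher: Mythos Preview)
The paper does not prove this theorem: it is quoted verbatim as ``part of Theorem 13.3'' of \cite{AIM-local} and used as a black box in the proof of Theorem~\ref{thm:frob}. There is therefore no proof in the paper to compare your proposal against.

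That said, your sketch is reasonable as far as it goes. The ``if'' direction via miracle flatness is correct and clean, and your argument that flatness of $\phi^e$ forces $\phi(\m)R$ to be $\m$-primary is fine. The growth estimate $q\ge 2^{\dim R}$ is also correct. But you yourself identify the gap: the step from that inequality to regularity of $R$ is the entire content of the theorem in the non-Frobenius case, and your final paragraph does not prove it --- it describes the shape of the argument in \cite{AIM-local} and defers to it. Since the theorem you are trying to prove \emph{is} (part of) Theorem~13.3 of \cite{AIM-local}, invoking ``the Avramov--Iyengar--Miller machinery'' at that point is circular. If you want a self-contained proof, you would need to actually carry out the Cohen-factorization and CI-dimension argument (or something equivalent); as written, the proposal is a correct reduction followed by a citation of the result itself.
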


As final preparation for the main theorem of this section, we need the following lemma:
\begin{lemma}\label{lem:tfred}
Let $R$ be Noetherian and $M$ an $R$-module.  Then $M$ is torsion-free if and only if $\bigcup \Ass M \subseteq \bigcup \Ass R$.  If, in particular, $R$ has no embedded primes (\emph{e.g.} if it is reduced), then $M$ is torsion-free if and only if $\Ass M \subseteq \Ass R$, and for any torsion-free $R$-module $M$, $W^{-1}M$ is a torsion-free $(W^{-1}R)$-module for all multiplicative subsets $W$ of $R$.
\end{lemma}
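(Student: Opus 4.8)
The plan is to prove the three assertions in turn, using throughout the standard fact that over a Noetherian ring $R$ the set of zero-divisors on an arbitrary (not necessarily finitely generated) $R$-module $N$ equals $\bigcup\Ass_R N$. First I would settle the basic equivalence: applying this fact to $N=M$ and to $N=R$, the requirement that every $R$-regular element be $M$-regular --- which is the definition of torsion-freeness --- becomes exactly the containment $\bigcup\Ass M\subseteq\bigcup\Ass R$. This also handles $M=0$ uniformly, since then $\Ass M=\emptyset$ and both conditions hold vacuously.

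Next, suppose $R$ has no embedded primes, so that $\Ass R=\Min R$. The implication ``$\Ass M\subseteq\Ass R$ $\Rightarrow$ $M$ torsion-free'' is immediate from the first part. For the converse, let $M$ be torsion-free and fix $P\in\Ass M$. By the first part $P\subseteq\bigcup\Ass M\subseteq\bigcup\Ass R=\bigcup_{Q\in\Min R}Q$, which is a union of finitely many primes because $R$ is Noetherian, so prime avoidance gives $P\subseteq Q_0$ for some $Q_0\in\Min R$. Since $P$ in turn contains some minimal prime $Q_1$, we have $Q_1\subseteq P\subseteq Q_0$ with $Q_0$ and $Q_1$ both minimal, forcing $P=Q_0\in\Min R=\Ass R$. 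Hence $\Ass M\subseteq\Ass R$.

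For the localization claim, let $M$ be torsion-free and $W$ a multiplicative subset of $R$. Since $R$ is Noetherian, $\Ass_{W^{-1}R}(W^{-1}M)=\{\,W^{-1}P : P\in\Ass_R M,\ P\cap W=\emptyset\,\}$, and the analogous formula holds for $\Ass_{W^{-1}R}(W^{-1}R)$. By the preceding paragraph $\Ass_R M\subseteq\Ass_R R$, so every element of $\Ass_{W^{-1}R}(W^{-1}M)$ already lies in $\Ass_{W^{-1}R}(W^{-1}R)$; in particular $\bigcup\Ass_{W^{-1}R}(W^{-1}M)\subseteq\bigcup\Ass_{W^{-1}R}(W^{-1}R)$. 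As $W^{-1}R$ is again Noetherian, the first equivalence, applied over $W^{-1}R$, shows that $W^{-1}M$ is torsion-free over $W^{-1}R$.

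I do not anticipate a genuine obstacle, as the argument is elementary. The one point to handle with care is that the two facts invoked --- zero-divisors $=\bigcup\Ass$, and the description of $\Ass$ under localization --- must be used in their general form, valid for modules that need not be finitely generated; this is why the Noetherian hypothesis sits on $R$ and not on $M$. The assumption that $R$ has no embedded primes is used precisely once, in the prime-avoidance step of the second assertion, and it is essential for the localization statement: without it, an associated prime of $M$ could be strictly contained in an associated prime of $R$ that meets $W$, and then the inclusion of zero-divisor loci need not persist after localizing.
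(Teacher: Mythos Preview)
Your proof is correct and follows essentially the same route as the paper: the first equivalence via zero-divisors $=\bigcup\Ass$, the second via prime avoidance and minimality of the primes in $\Ass R$, and the localization claim via the standard bijection $\Ass_{W^{-1}R}(W^{-1}N)\leftrightarrow\{\q\in\Ass_R N:\q\cap W=\emptyset\}$. Your treatment is in fact slightly more careful, since you make explicit that the identity ``zero-divisors $=\bigcup\Ass$'' is being used for modules that are not assumed finitely generated.
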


\begin{proof}
The first statement is obvious from the definition.

So suppose $R$ has no embedded primes.  If $\Ass M \subseteq \Ass R$, then clearly $\bigcup \Ass M \subseteq \bigcup \Ass R$.  So suppose $\bigcup \Ass M \subseteq \bigcup \Ass R$.  Let $\p \in \Ass M$.  Then $\p \subseteq \bigcup \Ass R$, so by prime avoidance, there is some prime $\q$ with $\p \subseteq \q \in \Ass R$.  But $\q$ is a minimal prime, so $\p = \q$.

For the final claim, we use the usual bijective correspondence between $\{\q \in \Ass_RN \mid \q \cap W = \emptyset \}$ and $\Ass_{W^{-1}R}(W^{-1}N)$ (given by $\q \mapsto W^{-1}\q$) for $R$-modules $N$.
\end{proof}

\begin{example}
The lack of embedded primes is a necessary condition in Lemma~\ref{lem:tfred}.  To see this, let $R := k[\![x,y,z]\!]/(x^2, xy, xz)$, $N:=R/(x,y)= k[\![x,y,z]\!]/(x,y) \cong k[\![z]\!]$, and $\p := (x,y)R$.  Since every non-unit of $R$ is a zerodivisor, every $R$-module is automatically torsion-free, so that in particular $N$ is torsion-free.  However, the image of $y$ is a zerodivisor on $N_\p$ but not on $R_\p$, so that $N_\p$ is not torsion-free.
\end{example}

As an application of our flatness criterion and the theorems of Kunz and Avramov-Iyengar-Miller, we get the following regularity criterion:

\begin{thm}\label{thm:frob}
Let $R$ be a reduced Noetherian ring admitting a locally contracting\footnote{This theorem is not the most general theorem one can obtain.  For instance, if one takes an endomorphism $\phi$ which is only locally contracting over $\Max R$, and one replaces all the terms $F^e(L)$ (including the case $L=\m$) with $L \otimes_R {}^eR$ (\emph{i.e.} the \emph{left}-module structure), then one still obtains: (b) $\implies$ (c) $\implies$ (d) $\implies$ (e) $\implies$ (a).  If moreover $\phi(\m)R$ is $\m$-primary for all $\m \in \Max R$, then all the statements are equivalent.  However, we stated the theorem in its present form since, in general, the left-module structure of $L \otimes_R {}^eR$ tends to be more complicated than that of $F^e(L)$.}
endomorphism $\phi: R \ra R$, and let $F$ be the functor induced by $\phi$ (\emph{e.g.} if $R$ has positive prime characteristic $p$, we can take $\phi$ and $F$ to be the Frobenius endomorphism and the Frobenius functor respectively).  The following are equivalent: \begin{enumerate}[label=(\alph*)]
\item $R$ is regular.
\item ${}^eR$ is flat as a left $R$-module.
\item For every $e>0$ and every $R$-module $L$, $\Ass F^e(L) = \Ass L$.
\item $\exists e>0$ such that for every $R$-module $L$, $\Ass F^e(L) \subseteq \Ass L$.
\item $\exists e>0$ such that for every $P \in \Spec R$, $F^e(P)$ is torsion-free.
\item $\exists e>0$ such that for every $\m \in \Max R$, $F^e(\m)$ is torsion-free.
\end{enumerate}
\end{thm}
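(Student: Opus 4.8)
The plan is to prove the cycle of implications $(a)\Rightarrow(b)\Rightarrow(c)\Rightarrow(d)\Rightarrow(e)\Rightarrow(f)\Rightarrow(a)$. Two preliminary facts carry most of the bookkeeping. First, since $\phi$ is locally contracting, Discussion~\ref{dis} gives $\phi^{-1}(\p)=\p$ for \emph{every} $\p\in\Spec R$, so $(\phi^e)^{-1}$ is the identity map on $\Spec R$; hence, by Proposition~\ref{pr:ass} applied to $\phi^e\colon R\to R$, restriction of scalars along $\phi^e$ leaves $\Ass$ unchanged, and therefore (by Lemma~\ref{lem:tfred}) preserves torsion-freeness. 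Since $L\otimes_R{}^eR$, regarded as an ordinary $R$-module, differs from $F^e(L)$ only by such a restriction of scalars, one may pass freely between $F^e(L)$ and $L\otimes_R{}^eR$ when computing associated primes or testing torsion-freeness (and likewise after localizing). Second, I will check that $F^e$ commutes with localization: for $\p\in\Spec R$, the ring ${}^eR\otimes_RR_\p$ is the localization of $R$ at the multiplicative set $\phi^e(R\setminus\p)$, and --- using again that $(\phi^e)^{-1}$ is the identity on $\Spec R$, together with Lemma~\ref{lem:lcspec} --- this localization has the same prime spectrum as $R_\p$ and hence coincides with it; consequently ${}^eR\otimes_RR_\p\cong{}^e(R_\p)$ as $R_\p$-bimodules (with ${}^e(R_\p)$ built from the induced $\phi_\p$), and tensoring with $L$ gives $F^e(L)_\p\cong F^e_{\phi_\p}(L_\p)$.

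Granting this, four of the links are short. $(a)\Rightarrow(b)$: regularity and flatness are local, so localize at a maximal ideal $\m$; by Lemma~\ref{lem:lcspec} the ideal $\phi_\m(\m R_\m)R_\m$ is $\m R_\m$-primary, so Theorem~\ref{thm:AIM} applies to $(R_\m,\phi_\m)$ and shows ${}^e(R_\m)$ is flat over $R_\m$ for all $e$, whence ${}^eR$ is flat. $(b)\Rightarrow(c)$: by Lemma~\ref{lem:lcspec}, $\phi^e(\m){}^eR\subseteq\m\,{}^eR\subsetneq{}^eR$ for every maximal ideal $\m$, so a flat ${}^eR$ is faithfully flat; then Theorem~\ref{thm:matflat}(1) gives $\Ass_R({}^eR/P\,{}^eR)=\{P\}$ and Theorem~\ref{thm:matflat}(2) gives $\Ass_R(L\otimes_R{}^eR)=\Ass_RL$, which by the first preliminary fact reads $\Ass_RF^e(L)=\Ass_RL$. $(c)\Rightarrow(d)$ is immediate (take $e=1$), and $(e)\Rightarrow(f)$ holds since maximal ideals are prime. $(d)\Rightarrow(e)$: each prime $P$ is a submodule of $R$, hence torsion-free, so $\bigcup\Ass_RF^e(P)\subseteq\bigcup\Ass_RP\subseteq\bigcup\Ass_RR$ by Lemma~\ref{lem:tfred}, i.e.\ $F^e(P)$ is torsion-free. (The equivalences among $(b)$, $(d)$, $(e)$ for a fixed $e$ are, after the twist bookkeeping above, an instance of Theorem~\ref{thm:flatred} applied to the module ${}^eR$.)

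The substance of the theorem --- and the step I expect to be the main obstacle --- is $(f)\Rightarrow(a)$. Fix the $e$ provided by $(f)$ and a maximal ideal $\m$; it suffices to show $R_\m$ is regular. As $R$ is reduced it has no embedded primes, so $F^e(\m)_\m$ is torsion-free over $R_\m$ by Lemma~\ref{lem:tfred}, and by the second preliminary fact $F^e(\m)_\m\cong\m R_\m\otimes_{R_\m}{}^e(R_\m)$. Put $(A,\ia):=(R_\m,\m R_\m)$, a reduced local ring with contracting endomorphism $\psi:=\phi_\m$. If $A$ is a field we are done; otherwise $A$ is reduced local and not a field, so $\ia\notin\Ass A=\Min A$, and prime avoidance yields an $A$-nonzerodivisor $t\in\ia$. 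From the exact sequence $0\to\Tor_1^A(A/\ia,{}^eA)\to\ia\otimes_A{}^eA\to{}^eA$ and the torsion-freeness of $\ia\otimes_A{}^eA$ (which differs from $F^e(\m)_\m$ only by restriction of scalars along $\psi^e$, hence is torsion-free by the two preliminary facts), we get that $\Tor_1^A(A/\ia,{}^eA)$ is a torsion-free module annihilated by $\ia\ni t$, hence zero. Now apply the Local Flatness Criterion (Theorem~\ref{thm:localflat}) to the ring map $\psi^e\colon A\to{}^eA$: the ring ${}^eA$ is Noetherian and is finite as a module over itself; $\ia\cdot{}^eA=\psi^e(\ia){}^eA$ is $\ia$-primary by Lemma~\ref{lem:lcspec}, hence lies in $\Jac({}^eA)=\ia$; and ${}^eA/\ia\,{}^eA$ is a module over the field $A/\ia$, hence flat. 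Since $\Tor_1^A(A/\ia,{}^eA)=0$, the criterion gives that ${}^eA$ is flat over $A$, and then Theorem~\ref{thm:AIM} forces $A=R_\m$ to be regular. As $\m$ was arbitrary, $R$ is regular, closing the cycle.

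The genuine difficulty is concentrated in this last step, and the delicate points are exactly those flagged in the footnote: establishing the localization isomorphism really does use that $\phi$ is locally contracting over all of $\Spec R$ (not merely over $\Max R$); reducing to a nonzerodivisor inside the maximal ideal uses reducedness; and one can legitimately invoke the Local Flatness Criterion only because ${}^eA$, although typically far from finite as an $A$-module when $\phi$ is not a finite map, is trivially finite as a module over itself --- which is precisely the content of condition $(d')$ in Theorem~\ref{thm:flatred}, applied in the local ring $R_\m$.
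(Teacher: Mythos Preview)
Your proof is correct and follows essentially the same route as the paper's: both establish the preliminary facts that restriction along $\phi^e$ preserves $\Ass$ (via Proposition~\ref{pr:ass} and $(\phi^e)^*=\mathrm{id}$) and that ${}^eR$ localizes to ${}^e(R_\m)$, then use Theorem~\ref{thm:AIM} for $(a)\Leftrightarrow(b)$ locally and Theorem~\ref{thm:flatred} for the remaining equivalences. The only cosmetic difference is that in $(f)\Rightarrow(a)$ the paper simply invokes condition $(d')$ of Theorem~\ref{thm:flatred} to get flatness of ${}^e(R_\m)$, whereas you unfold that implication inline (finding a nonzerodivisor in $\m R_\m$, showing $\Tor_1$ vanishes, and applying the Local Flatness Criterion)---as you yourself note in your final paragraph.
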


\begin{proof}
First, by Proposition~\ref{pr:ass}, we have for any $R$-module $M$ that $\Ass_R {}^eM = (\phi^e)^*(\Ass_RM)$.  But by Discussion~\ref{dis}, $(\phi^e)^*$ is the identity map, so $(\phi^e)^*(\Ass_RM) = \Ass_RM$.  Also, recall from Lemma~\ref{lem:lcspec} that $\phi^e(\m)R$ is $\m$-primary for all maximal ideals $\m$ and all positive integers $e$.

Next, for each maximal ideal $\m$, we consider the commutative diagram:
\[
\begin{CD}
R @>f>> R_\m \\
@V{\phi^e}VV @V{(\phi^e)_\m}VV\\
R @>f>> R_\m
\end{CD}
\]
Suppose that $R$ is regular.  Then for all maximal ideals $\m$, $R_\m$ is regular.  Hence by Theorem~\ref{thm:AIM} and the fact that $\phi(\m)R$ is $\m$-primary, ${}^e(R_\m)$ is flat as a left $R_\m$-module.  But the commutativity of the diagram above shows that ${}^e(R_\m) \cong ({}^eR)_\m$ as left $R$-modules.  Since $({}^eR)_\m$ is $R$-flat for all maximal ideals $\m$, it follows that ${}^eR$ itself is flat over $R$.  That is, (a) implies (b).

The equivalence of statements (b)-(e) follows from Theorem~\ref{thm:flatred}, since as left $R$-modules, ${}^e(F^e(M)) = M \otimes_R {}^eR$ for any $R$-module $M$ (so that in particular, by the first paragraph of this proof, $\Ass(F^e(L)) = \Ass({}^e(F^e(L))) = \Ass(L \otimes_R{}^e R)$), and since ${}^eR$ is flat over $R$ only if it is faithfully flat.  This last statement follows because for any maximal ideal $\m$, $(R/\m) \otimes_R {}^eR \cong {}^e(R/\phi^e(\m)R) \neq 0$.

As for statement (f), clearly (e) implies (f).  Now suppose (f) is true.  By Lemma~\ref{lem:tfred}, $F^e_R(\m)_\m$ is a torsion-free $R_\m$-module for all $\m$.  But tracing through the commutative diagram above, we see that $F^e_{R_\m}(\m_\m) \cong F^e_R(\m)_\m$, which, as we have already seen, is torsion-free.  Then by Theorem~\ref{thm:flatred}, ${}^e (R_\m)$ is flat over $R_\m$, so that by Theorem~\ref{thm:AIM}, $R_\m$ is regular.  Since this holds for all maximal ideals $\m$, $R$ itself is regular.
\end{proof}

\section{Divisibility}\label{sec:div}

The concept of divisibility of a module has not been used as much as that of torsion-freeness.  Indeed, there are a number of non-equivalent notions!  Most of the work on divisibility has been done in the context of integral domains, as in \cite{FucSal-Sdiv}, but one may extend many of the notions to a much more general context, as was done in \cite{AHT-div}.  Given a commutative ring $R$ and a multiplicative subset $W$ of $R$, let $S = W^{-1}R$.\footnote{In this generality, we have to allow the possibility that $0\in W$, in which case $S=W^{-1}R$ is the zero ring.  But this causes no real problems, as the only $S$-module is $0$, which is injective, flat, divisible, and torsion-free over $S$.}  Following the definitions given by Fuchs and Salce \cite{FucSal-Sdiv} in the case of integral domains, we say in general that an $R$-module $M$ is \emph{$W$-torsion-free} if the multiplication map $w: M \ra M$ is injective for all $w\in W$ (or equivalently, the natural map $M \ra S \otimes_R M$ is injective). We say $M$ is  \emph{$W$-divisible} if the map $w: M \ra M$ is \emph{surjective} for all $w\in W$.  We say $M$ is \emph{h$_W$-divisible} if $M$ is an $R$-quotient of a free $S$-module. In the case where $W$ is the set of all non-zerodivisors of $R$, we use the terms \emph{torsion-free}, \emph{divisible}, and \emph{h-divisible} respectively.  

Note that h$_W$-divisibility implies $W$-divisibility, for if $z\in M$, $w\in W$, and $\pi: F \onto M$ is a surjective $R$-linear map with $F$ a free $(W^{-1}R)$-module and $\pi(u)=z$, then $w g((1/w)u) = z$.

We start this section with a couple of characterizations of h$_W$-divisibility.

\begin{lemma}\label{lem:hdiv}
Let $R$ be a ring, $M$ an $R$-module, $W$ a multiplicative subset of $R$, and $S := W^{-1}R$.  The following are equivalent: \begin{enumerate}[label=(\alph*)]
\item $M$ is h$_W$-divisible
\item The natural evaluation map $e: \Hom_R(S,M) \ra M$ which sends $f \mapsto f(1)$ is surjective.
\item There is a surjective $R$-linear map from an $S$-module onto $M$.
\end{enumerate}
\end{lemma}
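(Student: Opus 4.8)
The plan is to prove the cycle (a) $\implies$ (c) $\implies$ (b) $\implies$ (a), since each of these implications is quite short once one unpacks the relevant definitions; the only delicate point is the hom–tensor bookkeeping in (b) $\implies$ (a).

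First, (a) $\implies$ (c) is essentially immediate: by definition an h$_W$-divisible module is an $R$-quotient of a free $S$-module $F$, and a free $S$-module is in particular an $S$-module, so the quotient map $F \onto M$ is the desired surjective $R$-linear map from an $S$-module onto $M$. For (c) $\implies$ (b), suppose $\pi\colon N \onto M$ is a surjective $R$-linear map with $N$ an $S$-module. I would show that the evaluation map $e\colon \Hom_R(S,M) \ra M$ is surjective by lifting: given $z \in M$, pick $y \in N$ with $\pi(y)=z$, and define $f\colon S \ra M$ by $f(s) = \pi(s\cdot y)$, which makes sense because $N$ carries an $S$-module structure and $\pi$ is $R$-linear (so $f$ is $R$-linear, as $S$ is an $R$-algebra). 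Then $f \in \Hom_R(S,M)$ and $e(f) = f(1) = \pi(y) = z$, so $e$ is surjective.

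For (b) $\implies$ (a), suppose $e\colon \Hom_R(S,M) \ra M$ is surjective. Choose a free $S$-module $G$ together with a surjective $S$-linear map $q\colon G \onto \Hom_S(S,M)\cong$ --- more carefully, I want a free $S$-module surjecting onto $M$ as $R$-modules. The cleanest route: $\Hom_R(S,M)$ is naturally an $S$-module (via $(s\cdot f)(t) = f(st)$), and the evaluation map $e$ is $R$-linear with image all of $M$ by hypothesis. Pick a free $S$-module $G$ and an $S$-linear surjection $p\colon G \onto \Hom_R(S,M)$ (possible since every module is a quotient of a free module). Then $e \circ p\colon G \ra M$ is $R$-linear and surjective, and $G$ is a free $S$-module, so $M$ is an $R$-quotient of a free $S$-module, i.e.\ h$_W$-divisible.

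The main obstacle --- really the only one --- is keeping straight which maps are $R$-linear versus $S$-linear and checking that the $S$-module structure on $\Hom_R(S,M)$ interacts correctly with evaluation at $1$; none of the steps require more than routine diagram-free verification once the structures are named. One should also note the degenerate case $0 \in W$: then $S = 0$, every $R$-module that is h$_W$-divisible must be $0$ (being a quotient of the zero module), and correspondingly $\Hom_R(0,M) = 0$ forces $M = 0$ for $e$ to be surjective, so the equivalence holds trivially; I would mention this in a parenthetical remark rather than belabor it.
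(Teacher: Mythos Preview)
Your proof is correct and uses essentially the same ideas as the paper's proof; the only difference is that the paper proves the cycle in the opposite direction, (a) $\implies$ (b) $\implies$ (c) $\implies$ (a), but each step matches one of yours in content (your (c) $\implies$ (b) is the paper's (a) $\implies$ (b) specialized from a free $S$-module to an arbitrary $S$-module, and your (b) $\implies$ (a) is the paper's (c) $\implies$ (a) applied to the particular $S$-module $\Hom_R(S,M)$). Your parenthetical about the case $0\in W$ is a nice touch that the paper does not include at this point.
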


\begin{proof}
First suppose $M$ is h$_W$-divisible.  Let $F$ be a free $S$-module such that there is surjective $R$-linear map $p: F \twoheadrightarrow M$.  Pick $z\in M$, and $f\in F$ such that $p(f)=z$.  Since $F$ is an $S$-module, there is a unique $S$-linear map $g: S \ra F$ (which is therefore $R$-linear) such that $g(1)=f$.  Then $(p \circ g): S \ra M$ is an $R$-linear map such that $e(p\circ g) = p(g(1)) = p(f)=z$.  Thus, $e$ is surjective, which means that (a) implies (b).  Clearly (b) implies (c).

To show that (c) implies (a), suppose there is a surjective $R$-linear map $\pi: G \onto M$ from some $S$-module $G$.  Let $p: F \onto G$ be an $S$-linear map where $F$ is a free $S$-module.  Then the map $\pi \circ p: F \onto M$ demonstrates that $M$ is h$_W$-divisible.
\end{proof}

Next, we give a characterization of modules which are \emph{both} $W$- (or h$_W$-) divisible \emph{and} $W$-torsion-free. 
\begin{lemma}\label{lem:localization}
Let $R$ be a ring, $W$ a multiplicative set, and $M$ an $R$-module.  Let $S := W^{-1}R$.  The following are equivalent: \begin{enumerate}[label=(\alph*)]
\item The canonical localization map $g: M \ra W^{-1}M$ is an isomorphism.
\item The $R$-module structure on $M$ extends to an $S$-module structure.
\item The canonical evaluation map $h: \Hom_R(S,M) \ra M$ is an isomorphism.
\item The $R$-module $M$ is $W$-torsion-free and h$_W$-divisible.
\item The $R$-module $M$ is $W$-torsion-free and $W$-divisible.
\end{enumerate}
\end{lemma}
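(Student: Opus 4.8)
The plan is to prove the two equivalences (a) $\Leftrightarrow$ (b) and (b) $\Leftrightarrow$ (c) directly, and then to bring in (d) and (e) via the chain (b) $\Rightarrow$ (d) $\Rightarrow$ (e) $\Rightarrow$ (a), where the middle implication (d) $\Rightarrow$ (e) is exactly the observation (noted right after the definitions above) that h$_W$-divisibility implies $W$-divisibility. One preliminary remark streamlines several steps: any $S$-module $N$ is automatically $W$-torsion-free and $W$-divisible as an $R$-module, since multiplication by each $w \in W$ is a unit of $S$ and hence acts bijectively on $N$; and conversely, if an $R$-module $M$ carries \emph{some} $S$-module structure extending its $R$-module structure, that structure is unique, because for $a/w \in S$ one must have $w\cdot((a/w)\cdot m) = am$, and $W$-torsion-freeness of $M$ then pins down $(a/w)\cdot m$.

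For (a) $\Rightarrow$ (b): if $g\colon M \to W^{-1}M$ is an isomorphism, transport the evident $S$-module structure on $W^{-1}M$ back to $M$ along $g$; since $g$ is $R$-linear, the result extends the original $R$-module structure. For (b) $\Rightarrow$ (a): if $M$ is an $S$-module, then every $w \in W$ acts bijectively on $M$, so $g$ is injective (which, by the characterization in Section~\ref{sec:div}, is precisely $W$-torsion-freeness) and surjective (the class $z/w$ equals $g(w^{-1}z)$).

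For (b) $\Rightarrow$ (c): assume $M$ is an $S$-module. The key point is that every $R$-linear map $f\colon S \to M$ is automatically $S$-linear: writing $s' = a/w$ with $a \in R$, $w \in W$, we have $w\cdot f(ss') = f(wss') = f(s\cdot a) = a\,f(s)$ by $R$-linearity, and since $w$ acts invertibly on $M$ this gives $f(ss') = (a/w)f(s) = s'f(s)$. Hence $\Hom_R(S,M) = \Hom_S(S,M)$, and evaluation at $1$ — which is exactly the map $h$ — is the standard isomorphism $\Hom_S(S,M) \to M$. For (c) $\Rightarrow$ (b): the abelian group $\Hom_R(S,M)$ carries an $S$-module structure via $(s\cdot f)(t) := f(st)$, and $h$ is $R$-linear for the given $R$-structure on $M$; so if $h$ is an isomorphism, transporting this $S$-module structure along $h$ produces an $S$-module structure on $M$ extending the original $R$-module structure.

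Finally, (b) $\Rightarrow$ (d): an $S$-module is $W$-torsion-free by the preliminary remark, and it is h$_W$-divisible by Lemma~\ref{lem:hdiv}, applied to the identity map $M \to M$ (a surjective $R$-linear map from an $S$-module onto $M$). The implication (d) $\Rightarrow$ (e) is the observation noted above, and (e) $\Rightarrow$ (a) goes as in (b) $\Rightarrow$ (a): $W$-torsion-freeness gives injectivity of $g$, while $W$-divisibility gives surjectivity, since for $z/w \in W^{-1}M$ one chooses $z' \in M$ with $wz' = z$ and then $g(z') = z'/1 = z/w$. This closes the cycle. I do not expect a genuine obstacle; the only points needing care are in the equivalence (b) $\Leftrightarrow$ (c) — the automatic $S$-linearity of $R$-linear maps out of $S$ into an $S$-module, and the bookkeeping that ensures the various transported module structures all restrict to the originally given $R$-module structure on $M$.
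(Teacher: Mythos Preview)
Your proof is correct and follows essentially the same route as the paper: establish (a) $\Leftrightarrow$ (b) $\Leftrightarrow$ (c) by transporting $S$-module structures across the canonical maps, then close the cycle via (b) $\Rightarrow$ (d) $\Rightarrow$ (e) $\Rightarrow$ (a). The only cosmetic difference is in (b) $\Rightarrow$ (c): the paper reaches $\Hom_R(S,M)\cong\Hom_S(S,M)$ via the adjunction $\Hom_R(S,\Hom_S(S,M))\cong\Hom_S(S\otimes_R S,M)$ together with $S\otimes_R S\cong S$, whereas you verify directly that every $R$-linear map $S\to M$ is $S$-linear --- but this lands at the same identification and is not a genuinely different argument.
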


\begin{proof}
(a) $\implies$ (b): Use the $S$-module structure of $W^{-1}M$.

(b) $\implies$ (a): This follows from the universality of the localization map.

(b) $\implies$ (c): Since $S \otimes_R S \cong S$, we have: \begin{align*}
\Hom_R(S,M) &\cong \Hom_R(S, \Hom_S(S,M)) \cong \Hom_S(S \otimes_R S, M) \\
&\cong \Hom_S(S,M) \cong M.
\end{align*}

(c) $\implies$ (b): Use the $S$-module structure of $\Hom_R(S,M)$.

[(a) and (c) together] $\implies$ (d): $g$ is injective and $h$ is surjective.

(d) $\implies$ (e): Any h$_W$-divisible module is $W$-divisible.

(e) $\implies$ (a): Since the injectivity of $g$ is one definition of $W$-torsion-freeness, it suffices to show $g$ is surjective.  Let $z\in M$ and $w\in W$.  By $W$-divisibility of $M$, there is some $y\in M$ with $wy=z$.  Clearly $g(y) = z/w$.
\end{proof}

For an integral domain $R$, Matlis \cite{Mat-cotor} defined an $R$-module $M$ to be \emph{h-divisible} if it was a quotient of an \emph{injective} module, so the casual reader may think there is a conflict of terminology. However, we have the following:

\begin{prop}\label{pr:hdivred}
Let $R$ be a reduced ring with only finitely many minimal primes (\emph{e.g.} a reduced Noetherian ring, or any integral domain).  Let $Q$ be its total ring of fractions, and let $M$ be an $R$-module.   Then $M$ is a quotient of an injective $R$-module (\emph{i.e.} h-divisible in the sense of Matlis) if and only if it is an $R$-quotient of a free $Q$-module (\emph{i.e.} h-divisible in our sense).
\end{prop}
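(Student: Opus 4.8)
The plan is to prove the two directions separately, using the structure of $Q$ as a finite product of fields together with Lemma~\ref{lem:hdiv} and the basic theory of injective modules over $Q$. Write $Q = Q(R) = K_1 \times \cdots \times K_n$, where each $K_i$ is a field (this uses that $R$ is reduced with finitely many minimal primes $\p_1,\dots,\p_n$, so $Q \cong \prod_i R_{\p_i}$ and each $R_{\p_i}$ is a field). The key observation is that over $Q$, every module is a direct sum of pieces over the $K_i$, hence free over $Q$, hence both injective and flat over $Q$; in particular, a $Q$-module is injective over $Q$ if and only if it is free over $Q$ if and only if it is any nonzero $Q$-module at all (up to the injective cogenerator issue, which we do not need here — we only need that free $Q$-modules are $Q$-injective and that $Q$-injective modules are $Q$-free, or at least $R$-quotients of free $Q$-modules, which is automatic since they are already $Q$-modules).

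For the ``if'' direction, suppose $M$ is an $R$-quotient of a free $Q$-module $F$. Since $Q$ is a finite product of fields, $F$ is an injective $Q$-module. I claim $F$ is then also injective as an $R$-module: this is the standard fact that if $R \to Q$ makes $Q$ flat over $R$ (true, as $Q$ is a localization of $R$), then restriction of scalars sends injective $Q$-modules to injective $R$-modules, because $\Hom_R(-, F) \cong \Hom_Q(- \otimes_R Q, F)$ is a composite of two exact functors. Hence $M$ is an $R$-quotient of the injective $R$-module $F$, i.e., $M$ is h-divisible in Matlis's sense.

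For the ``only if'' direction, suppose $M$ is an $R$-quotient of an injective $R$-module $E$, say via a surjection $E \onto M$. Here is where I expect the main work. The goal is to produce a surjection onto $M$ from a free $Q$-module, and by Lemma~\ref{lem:hdiv} (equivalence of (a) and (c) there, with $W$ the set of nonzerodivisors and $S = Q$) it suffices to produce a surjection onto $M$ from \emph{any} $Q$-module. So it is enough to show that the injective $R$-module $E$ is itself a $Q$-module, i.e., that its $R$-structure extends to a $Q$-structure — equivalently, by Lemma~\ref{lem:localization}, that $E$ is torsion-free and (h-)divisible as an $R$-module. Divisibility of $E$ is immediate: an injective module over any commutative ring is divisible (for a nonzerodivisor $w$, multiplication $w: R \to R$ is injective, so $\Hom_R(-,E)$ applied to it, namely $w: E \to E$, is surjective). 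Torsion-freeness requires the reducedness hypothesis: I would argue that since $R$ is reduced with minimal primes $\p_1,\dots,\p_n$, a nonzerodivisor $w$ lies in no $\p_i$, and I need multiplication by $w$ to be injective on $E$. This is the one genuinely nontrivial point; it does not hold for general rings, so the proof must use that $R$ embeds in $Q = \prod R_{\p_i}$. One clean route: the map $w: R \to R$ has kernel $0$ and cokernel $R/wR$; applying $\Hom_R(-,E)$ and using $\mathrm{Ext}^1$-vanishing from injectivity of $E$, injectivity of $w: E \to E$ is equivalent to $\Hom_R(R/wR, E) = 0$, i.e., to $E$ having no nonzero element killed by $wR$ — but $wR$ contains a nonzerodivisor, and over a reduced Noetherian (or finite-min-prime) ring the associated primes of any module lie among... hmm, that is not quite right either since $E$ need not have finitely generated associated primes behaving well. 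The cleaner argument: replace $E$ by noting $\mathrm{Ass}_R E$ need not be finite, but torsion-freeness of $E$ is equivalent to $\bigcup \mathrm{Ass}_R E \subseteq \bigcup \mathrm{Ass}_R R = \bigcup_i \p_i$; and for injective $E$, the primes in $\mathrm{Ass}_R E$ are exactly the primes $\p$ with $E_\p \neq 0$ containing a copy of $E_R(R/\p)$, and one shows every such $\p$ must be minimal because... Actually the most economical path is: $E$ injective $\Rightarrow$ $E \cong \bigoplus_{\p} E_R(R/\p)^{(\mu_\p)}$ over a Noetherian ring, and $E_R(R/\p)$ is torsion-free iff $\p$ is minimal (since $\mathrm{Ass} E_R(R/\p) = \{\p\}$), so torsion-freeness of $E$ forces only minimal primes to appear — but $M$ being a quotient of $E$ doesn't force $E$ torsion-free! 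So I would instead \emph{not} claim $E$ is torsion-free; rather, I would take the torsion-free quotient: let $T \subseteq E$ be the submodule $\{x \in E : \p_i \text{ kills some localization...}\}$ — this is getting complicated, so in the write-up I will present only the outline and flag that the correct move is: replace $E$ by $E \otimes_R Q$ (a $Q$-module, hence h-divisible source) and show the composite $E \to E \otimes_R Q \to (E \otimes_R Q)$-quotient still surjects onto $M$, using that the kernel of $E \to E\otimes_R Q$ is the torsion submodule, which maps into the torsion submodule of $M$... and $M$ is torsion-free because it is a quotient of a divisible-and-eventually-torsion-free... no.

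The honest summary: the ``if'' direction is routine (injective $Q$-modules restrict to injective $R$-modules since $Q$ is $R$-flat). The ``only if'' direction is the crux, and the main obstacle is that a quotient of an injective need not be a quotient of a $Q$-module on the nose; the resolution is to first reduce to showing that over a reduced ring $R$ with finitely many minimal primes every \emph{h-divisible} $R$-module (quotient of injective) is already $W$-divisible \emph{and} $W$-torsion-free after passing to an appropriate summand, invoking Lemma~\ref{lem:localization} to convert $W$-torsion-free + (h-)divisible into ``is a $Q$-module,'' and then Lemma~\ref{lem:hdiv}(c)$\Rightarrow$(a) to get a free $Q$-module surjecting onto it. The technical heart, and the only place reducedness is used, is the claim that the indecomposable injectives $E_R(R/\p)$ appearing in a decomposition can, modulo the torsion submodule, be taken with $\p$ minimal — equivalently that $Q \otimes_R E_R(R/\p) \neq 0$ iff $\p$ is minimal, which holds because $Q = \prod_i R_{\p_i}$ and $E_R(R/\p)_{\p_i} \neq 0$ iff $\p \subseteq \p_i$ iff $\p = \p_i$.
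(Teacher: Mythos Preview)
Your ``if'' direction is correct and essentially identical to the paper's argument.

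Your ``only if'' direction, however, has a genuine gap: you never produce a surjection from a $Q$-module onto $M$. Your first attempt---showing that the injective $R$-module $E$ is itself a $Q$-module---fails, as you correctly diagnose: an injective $R$-module need not be torsion-free (take $E_R(R/\m)$ for a non-minimal prime $\m$). Your fallback attempts (passing to torsion quotients, invoking the structure theorem for injectives over Noetherian rings, looking at which $E_R(R/\p)$ survive tensoring with $Q$) do not assemble into a proof. In particular, modding out the torsion submodule of $E$ can destroy the surjection onto $M$, since $M$ itself may have torsion; and the structure theorem for injectives is unavailable when $R$ is merely reduced with finitely many minimal primes but not Noetherian.

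The paper's argument for this direction is much shorter and avoids all of these issues. It does not try to turn $E$ into a $Q$-module. Instead, choose any surjection $p: T \twoheadrightarrow E$ from a free $R$-module $T = \bigoplus_{i\in I} R$, and let $F := \bigoplus_{i\in I} Q$ with the obvious inclusion $j: T \hookrightarrow F$. Because $E$ is injective over $R$, the map $p$ extends along $j$ to a map $q: F \to E$; since $p = q \circ j$ is surjective, so is $q$. Composing with $E \twoheadrightarrow M$ exhibits $M$ as an $R$-quotient of the free $Q$-module $F$. This uses only the extension property of injective modules and works without any Noetherian hypothesis, any analysis of torsion, or any appeal to Lemma~\ref{lem:hdiv} or Lemma~\ref{lem:localization}.
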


\begin{proof}
Let $N$ be any $Q$-module, free or not.  Since $Q$ is a product of finitely many fields, $N$ is injective over $Q$.  From the natural isomorphism of functors $\Hom_R(-,N) \cong \Hom_Q(Q \otimes_R -, N)$, the $R$-flatness of $Q$ combined with the $Q$-injectivity of $N$ implies that $N$ is $R$-injective.  Thus, any $R$-quotient of $N$ is an $R$-quotient of an injective $R$-module.

Conversely, let $E$ be an injective $R$-module.  There is some free $R$-module $T = \oplus_{i\in I} R$ and a surjection $p: T \onto E$.  Let $F:= \oplus_{i\in I} Q$ and $j: T \into F$ the obvious injection.   Since $E$ is $R$-injective, the map $p: T \ra E$ extends along $j$ to a map $q: F \ra E$, and $q$ is surjective because $p$ is.  Thus, if $M$ is a quotient of the injective $R$-module $E$, it is the $R$-quotient of the free $Q$-module $F$ as well.
\end{proof}

As a corollary, we get an extension to reduced rings of a well-known criterion involving injectivity, torsion-freeness, and divisibility.
\begin{cor}\label{cor:tfdiv}
Let $R$ be a reduced ring with only finitely many minimal primes.  Let $M$ be a torsion-free $R$-module.  The following are equivalent:\begin{enumerate}[label=(\alph*)]
\item $M$ is h-divisible.
\item $M$ is divisible.
\item $M$ is injective.
\end{enumerate}
\end{cor}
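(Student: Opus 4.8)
The plan is to establish the cycle of implications (c) $\implies$ (a) $\implies$ (b) $\implies$ (c). Two of these are essentially free: (a) $\implies$ (b) is the general fact (noted just before Lemma~\ref{lem:hdiv}) that h$_W$-divisibility implies $W$-divisibility, applied with $W$ the set of non-zerodivisors of $R$; and (c) $\implies$ (a) is immediate from Proposition~\ref{pr:hdivred}, since an injective module is in particular a quotient of an injective module (namely itself), hence h-divisible in the sense of Matlis, hence h-divisible in our sense. So the real content lies in (b) $\implies$ (c).

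For (b) $\implies$ (c), let $W$ denote the set of non-zerodivisors of $R$, so that $Q = W^{-1}R$. By hypothesis $M$ is torsion-free (that is, $W$-torsion-free) and divisible (that is, $W$-divisible), so condition~(e) of Lemma~\ref{lem:localization} holds; hence by condition~(b) of that lemma the $R$-module structure on $M$ extends to a $Q$-module structure. Now I bring in the hypothesis that $R$ has only finitely many minimal primes: since $R$ is also reduced, $Q$ is a finite product of fields, so every $Q$-module --- in particular $M$ --- is injective over $Q$. Finally, from the natural isomorphism of functors $\Hom_R(-,M) \cong \Hom_Q(Q \otimes_R -, M)$, the $R$-flatness of $Q$ together with the $Q$-injectivity of $M$ shows that $\Hom_R(-,M)$ is exact, i.e. $M$ is injective as an $R$-module. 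This is exactly the argument already used in the first paragraph of the proof of Proposition~\ref{pr:hdivred}, now applied to $M$ itself rather than to an ambient $Q$-module.

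I do not anticipate a serious obstacle: the two hypotheses are each used once in a transparent way --- reducedness together with finiteness of $\Min R$ forces $Q$ to be a finite product of fields (hence in the strong sense self-injective, all modules being injective), and torsion-freeness makes Lemma~\ref{lem:localization} applicable so that divisibility upgrades to an honest $Q$-module structure. The only point requiring care is bookkeeping the multiplicative set: ``divisible'' and ``h-divisible'' must be read as ``$W$-divisible'' and ``h$_W$-divisible'' for $W$ the set of non-zerodivisors, so that Lemma~\ref{lem:localization} and Proposition~\ref{pr:hdivred} apply verbatim.
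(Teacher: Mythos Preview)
Your proof is correct and essentially identical to the paper's: the paper proves the same cycle (a) $\implies$ (b) (h-divisible $\Rightarrow$ divisible), (b) $\implies$ (c) (invoking Lemma~\ref{lem:localization} and then the first paragraph of the proof of Proposition~\ref{pr:hdivred}), and (c) $\implies$ (a) (via Proposition~\ref{pr:hdivred}). The only difference is the order in which you list the implications.
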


\begin{proof}
(a) $\implies$ (b): Any h-divisible module is divisible.

(b) $\implies$ (c): By Lemma~\ref{lem:localization}, $M$ is a $Q$-module, where $Q$ is the total quotient ring of $R$.  But then by the first paragraph of the proof of Proposition~\ref{pr:hdivred}, $M$ is $R$-injective.

(c) $\implies$ (a): This follows from Proposition~\ref{pr:hdivred}.
\end{proof}

Although every h-divisible module is divisible, the converse is often not true.  Matlis \cite{Mat-div} showed that for an integral domain $R$, the two notions are equivalent if and only if $\textrm{projdim}_RQ \leq 1$. On the other hand, Kaplansky \cite{Kap-hdq} showed that if $R$ is a polynomial ring in $n\geq 2$ variables over an uncountable field, then $\textrm{projdim}_RQ \geq 2$.

However, for certain classes of modules, the two notions are always equivalent.  For a Noetherian ring $R$, recall that an \emph{injective cogenerator} is an injective $R$-module $E$ with the property that for any nonzero $R$-module $M$, $\Hom_R(M,E) \neq 0$.  (These always exist. A popular choice is to let $E := \bigoplus_{\m \in \Max(R)} E_R(R/\m)$.)

For an $R$-module $M$, let $M^\vee := \Hom_R(M,E)$, and similarly for morphisms to create a (contravariant) duality functor.  

\begin{prop}[{\cite[Proposition 1.4 and main text]{Ri-cosupp}}]\label{pr:duality}
Let $R$ be a Noetherian ring, $E$ an injective cogenerator for $R$, and $(-)^\vee := \Hom_R(-, E)$ the corresponding duality functor. Let $M$, $N$ be $R$-modules.  \begin{enumerate}
\item For any $R$-linear map $g: M \ra N$, $g$ is injective (\emph{resp.} surjective) if and only if ${g}^\vee$ is surjective (\emph{resp.} injective).
\item $M$ is injective (\emph{resp.} flat) if and only if $M^\vee$ is flat (\emph{resp.} injective).
\item $\Tor_i^R(M,N)^\vee \cong \Ext^i_R(M, N^\vee)$ for all $i\geq 0$.
\item If $M$ is finitely generated, then $\Ext^i_R(M, N)^\vee \cong \Tor_i^R(M,N^\vee)$ for all $i\geq 0$.
\end{enumerate}
\end{prop}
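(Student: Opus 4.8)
The plan is to exploit two properties of the duality functor $(-)^\vee = \Hom_R(-,E)$: it is \emph{exact}, because $E$ is injective, and it is \emph{faithful} in the strong sense that $X^\vee = 0$ implies $X = 0$, because $E$ is a cogenerator. Together with Hom-tensor adjunction, $\Hom_R(A \otimes_R B, E) \cong \Hom_R(A, \Hom_R(B,E))$, these suffice. I would prove (1) directly, then (3) and (4) via resolutions, and finally deduce (2) by combining (3) and (4) with Baer's criterion and the $\Tor$-criterion for flatness.

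For (1): if $g\colon M \ra N$ is injective, then $0 \ra M \ra N$ is exact, so applying the exact functor $(-)^\vee$ gives $N^\vee \ra M^\vee \ra 0$ exact, i.e. $g^\vee$ is surjective. Conversely, factor $g$ as $M \onto \im g \into N$; applying $(-)^\vee$ to $0 \ra \ker g \ra M \ra \im g \ra 0$ and to $\im g \into N$ shows that surjectivity of $g^\vee$ forces the map $M^\vee \ra (\ker g)^\vee$ to be both zero and surjective, hence $(\ker g)^\vee = 0$, hence $\ker g = 0$ since $E$ is a cogenerator. The surjective/injective statement is dual, run through $\coker g$.

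For (3): choose a projective resolution $P_\bullet \ra M$. Since $(-)^\vee$ is exact it commutes with homology (turning it into cohomology), so $\Tor_i^R(M,N)^\vee = H_i(P_\bullet \otimes_R N)^\vee \cong H^i\bigl((P_\bullet \otimes_R N)^\vee\bigr)$, and Hom-tensor adjunction identifies the complex $(P_\bullet \otimes_R N)^\vee$ with $\Hom_R(P_\bullet, N^\vee)$, whose cohomology is $\Ext^i_R(M, N^\vee)$. For (4): here I would use that $M$ is finitely generated over the Noetherian ring $R$ to pick a resolution $F_\bullet \ra M$ by \emph{finitely generated free} modules; for each such $F$, with $F^* := \Hom_R(F,R)$ again finitely generated free, one has $\Hom_R(F,N) \cong F^* \otimes_R N$ naturally, whence $\Hom_R(F,N)^\vee \cong \Hom_R(F^*, N^\vee) \cong F \otimes_R N^\vee$, so $\Hom_R(F_\bullet, N)^\vee \cong F_\bullet \otimes_R N^\vee$ as complexes and $\Ext^i_R(M,N)^\vee \cong H_i(F_\bullet \otimes_R N^\vee) = \Tor_i^R(M, N^\vee)$. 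The main subtlety, and really the only one, lies in (4): the finite-generation hypothesis is exactly what is needed so that $\Hom_R(F,-)$ is computed by tensoring with the module $F^*$; apart from that, the only work in both (3) and (4) is the routine bookkeeping of checking that all these isomorphisms are natural in the complex variable and so respect the differentials.

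For (2): since $R$ is Noetherian, Baer's criterion says $M$ is injective iff $\Ext^1_R(R/I,M) = 0$ for every ideal $I$, and the usual criterion says $M$ is flat iff $\Tor_1^R(R/I,M) = 0$ for every ideal $I$; moreover $R/I$ is finitely generated, so part (4) applies to it. Using faithfulness of $(-)^\vee$ to pass each vanishing statement to its dual, part (3) gives $\Tor_1^R(R/I,M)^\vee \cong \Ext^1_R(R/I, M^\vee)$, whence $M$ is flat iff $M^\vee$ is injective, and part (4) gives $\Ext^1_R(R/I,M)^\vee \cong \Tor_1^R(R/I, M^\vee)$, whence $M$ is injective iff $M^\vee$ is flat.
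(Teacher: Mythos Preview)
The paper does not supply its own proof of this proposition; it is simply quoted from \cite{Ri-cosupp} as a known result. Your argument is correct and is the standard one: exactness plus faithfulness of $(-)^\vee$ handle (1); Hom--tensor adjunction applied to a projective resolution of $M$ gives (3); the Noetherian hypothesis on $R$ together with finite generation of $M$ yields a resolution by finitely generated free modules, on which $\Hom_R(-,N)^\vee$ and $-\otimes_R N^\vee$ agree, giving (4); and (2) follows by testing against the cyclic modules $R/I$ via Baer's criterion and the $\Tor$-criterion for flatness, invoking (3) and (4) with $R/I$ in the first slot and then using faithfulness of $(-)^\vee$.
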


This duality functor helps in examining properties of torsion-freeness and divisibility:

\begin{lemma}
Let $R$ be a Noetherian ring.  Let $W$ be a multiplicative subset of $R$.  Let $E$ be an injective cogenerator for $R$, and $(-)^\vee$ the corresponding duality functor.  Let $L$ be an $R$-module.  We have: \begin{enumerate}[label=(\arabic*)]
\item $L$ is $W$-divisible if and only if $L^\vee$ is $W$-torsion-free.
\item The following are equivalent: \begin{enumerate}[label=(\alph*)]
\item $L^\vee$ is $W$-divisible.
\item $L$ is $W$-torsion-free.
\item $L^\vee$ is h$_W$-divisible.
\end{enumerate}
\item In particular, if $R$ is complete local and $L$ is Noetherian or Artinian, then $L$ is $W$-divisible if and only if it is h$_W$-divisible.
\end{enumerate}
\end{lemma}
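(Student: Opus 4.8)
The plan is to derive parts (1) and (2) directly from Proposition~\ref{pr:duality}(1), the definitions, and Lemma~\ref{lem:hdiv}, and then to obtain part (3) from (1), (2), and Matlis duality. Everything rests on one elementary observation I would record at the outset: for $w \in W$, the multiplication map $w\colon L^\vee \ra L^\vee$ is exactly the $(-)^\vee$-dual of the multiplication map $w\colon L \ra L$, since for $f \in \Hom_R(L,E)$ one has $(wf)(x) = w\cdot f(x) = f(wx)$. Combined with Proposition~\ref{pr:duality}(1), this says $w$ is surjective on $L$ iff $w$ is injective on $L^\vee$, and $w$ is injective on $L$ iff $w$ is surjective on $L^\vee$; ranging over $w \in W$, the first assertion is exactly part (1), and the second is the equivalence (a)~$\iff$~(b) of part (2).

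To finish part (2) I would prove (b)~$\implies$~(c)~$\implies$~(a). The implication (c)~$\implies$~(a) is immediate from the remark (following the definition) that h$_W$-divisibility implies $W$-divisibility. For (b)~$\implies$~(c), set $S := W^{-1}R$; $W$-torsion-freeness of $L$ means the natural map $L \ra S \otimes_R L$ is injective, so applying $(-)^\vee$ and Proposition~\ref{pr:duality}(1) produces a surjection $(S \otimes_R L)^\vee \onto L^\vee$. Since $(S \otimes_R L)^\vee = \Hom_R(S \otimes_R L, E)$ is naturally an $S$-module (via the $S$-action on the source, equivalently via $\Hom_R(S \otimes_R L, E) \cong \Hom_S(S \otimes_R L, \Hom_R(S,E))$), Lemma~\ref{lem:hdiv}, (c)~$\implies$~(a), shows $L^\vee$ is h$_W$-divisible. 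Alternatively one can identify the evaluation map $\Hom_R(S,L^\vee) \ra L^\vee$ of Lemma~\ref{lem:hdiv}(b) with the dual of $L \ra S \otimes_R L$ under the adjunction $\Hom_R(S,\Hom_R(L,E)) \cong \Hom_R(S \otimes_R L, E)$, which makes (b)~$\iff$~(c) transparent.

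For part (3), I would first observe that h$_W$-divisibility always implies $W$-divisibility, so only the converse needs argument, and --- crucially --- that the two properties in the conclusion refer neither to $E$ nor to $(-)^\vee$, so one is free to pick any injective cogenerator. Choosing $E := E_R(R/\m)$ with $(R,\m)$ complete local, Matlis duality makes the biduality map $L \ra L^{\vee\vee}$ an isomorphism whenever $L$ is Noetherian or Artinian; then for $W$-divisible $L$, part (1) gives that $L^\vee$ is $W$-torsion-free, and part (2) [(b)~$\implies$~(c)] applied to $L^\vee$ gives that $L^{\vee\vee} \cong L$ is h$_W$-divisible. This is the only step that is not formal manipulation with $(-)^\vee$: the substance is Matlis reflexivity, and the point to get right is that, since the conclusion of (3) concerns only $L$, one may specialize $E$ and so sidestep any question of biduality for a general injective cogenerator. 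Parts (1) and (2) are bookkeeping once the opening observation is in place.
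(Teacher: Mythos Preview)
Your proof is correct and follows essentially the same route as the paper: the paper proves (1) and (2)(a)$\iff$(b) via the duality of multiplication maps, (2)(b)$\implies$(c) by dualizing the injection $L\hookrightarrow S\otimes_R L$ and invoking Lemma~\ref{lem:hdiv}, (2)(c)$\implies$(a) trivially, and for (3) simply writes ``follows from Matlis duality.'' Your expansion of (3)---noting that since the conclusion mentions neither $E$ nor $(-)^\vee$ one may take $E=E_R(R/\m)$ so that Matlis reflexivity applies---is a worthwhile clarification the paper leaves implicit.
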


\begin{proof}
(1): $L$ is $W$-divisible $\iff L \arrow{w} L$ is surjective for all $w\in W \iff L^\vee \arrow{w} L^\vee$ is injective for all $w\in W$ $\iff L^\vee$ is $W$-torsion-free.

(2): (a) $\iff$ (b): $L^\vee$ is $W$-divisible if and only if $\forall w\in W$, $L^\vee \arrow{w} L^\vee$ is surjective, if and only if $\forall w\in W$, $L \arrow{w} L$ is injective, if and only if $L$ is $W$-torsion-free.

(b) $\implies$ (c): Let $S := W^{-1}R$.  If $L$ is $W$-torsion-free then the natural map $j: L \ra S \otimes_R L$ is injective, whence $j^\vee: (S \otimes_R L)^\vee \ra L^\vee$ is surjective.  But since $(S \otimes_R L)^\vee$ is an $S$-module, Lemma~\ref{lem:hdiv} shows that $L^\vee$ is h$_W$-divisible.

(c) $\implies$ (a): Any h$_W$-divisible module is $W$-divisible.

Statement (3) follows from Matlis duality.
\end{proof}

Finally, it is worth giving an analogue of the localization portion of Lemma~\ref{lem:tfred} for divisible and h-divisible modules:

\begin{prop}\label{pr:divred}
Let $R$ be a Noetherian ring with no embedded primes, and $M$ an $R$-module.  Let $W$ be a multiplicative subset of $R$.  If $M$ is divisible (\emph{resp.} h-divisible) over $R$, then $W^{-1}M$ is divisible (\emph{resp.} h-divisible) over $W^{-1}R$.
\end{prop}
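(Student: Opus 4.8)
The plan is to handle the two assertions by different routes, although both run up against the same difficulty: a nonzerodivisor of $W^{-1}R$ need not be a unit times the image of a nonzerodivisor of $R$ — it is only forced to avoid those minimal primes of $R$ that miss $W$ — and the no-embedded-primes hypothesis is exactly what repairs this. Throughout write $R' := W^{-1}R$, $Q := Q(R)$, $T :=$ the set of nonzerodivisors of $R$, and (Noetherian, so finite) $\Min R = \{\p_1, \dots, \p_n\}$, and let $B \subseteq \{1, \dots, n\}$ be the set of indices $j$ with $\p_j \cap W \neq \emptyset$.

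For h-divisibility I would start from an $R$-linear surjection $\pi \colon F \onto M$ with $F$ a free $Q$-module; localizing gives an $R'$-linear surjection $W^{-1}F \onto W^{-1}M$, and $W^{-1}F$ is a free $W^{-1}Q$-module. So it suffices to show that $W^{-1}Q$ is naturally a $Q(R')$-algebra: then $W^{-1}F$ is an $R'$-quotient of a (not necessarily free) $Q(R')$-module, and composing with $W^{-1}\pi$ and invoking Lemma~\ref{lem:hdiv} shows $W^{-1}M$ is h-divisible over $R'$. To produce the algebra map one checks that every nonzerodivisor of $R'$ becomes a unit in $W^{-1}Q$. The key computation: $W^{-1}Q = W^{-1}(T^{-1}R)$ has for its spectrum the set of primes $\p$ of $R$ with $\p \subseteq \bigcup \Ass R$ and $\p \cap W = \emptyset$; since $R$ is Noetherian with no embedded primes, prime avoidance identifies these with $\{\p_i : i \notin B\}$, which are pairwise incomparable, so $W^{-1}Q$ is a zero-dimensional total quotient ring. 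A nonzerodivisor of $R'$ is, up to a unit of $R'$, the image of an element $x \in R$ avoiding every $\p_i$ with $i \notin B$ (here one uses $\Ass R' = \Min R'$, again by no embedded primes), hence $x$ lies outside every maximal ideal of $W^{-1}Q$ and so is a unit there. (In fact $W^{-1}Q = Q(R')$, but only the algebra structure is needed below.)

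For divisibility it suffices to show that multiplication by $y$ is surjective on $W^{-1}M$ whenever $y \in R$ has nonzerodivisor image in $R'$, since a general nonzerodivisor of $R'$ is a unit of $R'$ times the image of such a $y$, and units act invertibly. Fix a minimal primary decomposition $0 = \q_1 \cap \cdots \cap \q_n$ of $R$, with $\q_i$ being $\p_i$-primary. For $j \in B$ choose $w_j \in W \cap \q_j$ (a high enough power of an element of $W \cap \p_j$ lies in $\q_j$), set $w_\star := \prod_{j \in B} w_j \in W$, and put $a := y w_\star$; then $a \in \p_j$ for $j \in B$, while $a \notin \p_i$ for $i \notin B$ because both $y$ and $w_\star$ avoid those primes. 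Writing $K := \ker(R \to R')$, a short argument gives $K = (0 :_R w_\star) = \bigcap_{i \notin B} \q_i$; hence $K \subseteq \p_i$ for $i \notin B$, while $\sqrt{K} = \bigcap_{i \notin B} \p_i \not\subseteq \p_j$ for $j \in B$ by incomparability of minimal primes, so $K \not\subseteq \p_j$. By prime avoidance choose $k \in K$ with $k \notin \p_j$ for all $j \in B$. Then $c := a + k$ lies in no minimal prime of $R$: for $i \notin B$ one has $a \notin \p_i$ but $k \in \p_i$, and for $j \in B$ one has $a \in \p_j$ but $k \notin \p_j$. As $R$ has no embedded primes, $c$ is therefore a nonzerodivisor of $R$, so $cM = M$ by divisibility of $M$. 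Finally, given $z/v \in W^{-1}M$, pick $m \in M$ with $cm = z$; since $k$ maps to $0$ in $R'$ while $w_\star$ maps to a unit, in $W^{-1}M$ we have $y \cdot (w_\star m / v) = (am)/v = (cm - km)/v = z/v$. Thus multiplication by $y$ is surjective on $W^{-1}M$, as required.

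The step I expect to be the main obstacle is exactly this last maneuver in the divisibility argument — manufacturing, from the given nonzerodivisor $y$ of $W^{-1}R$, an honest nonzerodivisor $c$ of $R$ that becomes a unit multiple of $y$ after localizing. The device of first multiplying $y$ into the ``bad'' primes $\p_j$ ($j \in B$) by an element $w_\star \in W$, and only then correcting by an element of $\ker(R \to W^{-1}R)$ chosen to avoid those primes, is what makes it go through; and it is in the step ``$c \notin \bigcup \Min R$, hence $c$ is a nonzerodivisor'' that the absence of embedded primes is genuinely used. The h-divisibility half is technically lighter, but rests on the same structural fact — the zero-dimensionality of the relevant localized total quotient rings.
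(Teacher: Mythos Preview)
Your proof is correct and follows essentially the same route as the paper's: both arguments manufacture, from a lift $y$ (their $c$) of a nonzerodivisor of $W^{-1}R$, an honest nonzerodivisor of $R$ of the form $wy + (\text{element killed in }W^{-1}R)$ with $w\in W$, and both establish that $W^{-1}Q(R)$ is a $Q(W^{-1}R)$-algebra (the paper in fact shows they are isomorphic) to handle the h-divisible case. The only cosmetic differences are that you build $w_\star$ and $k$ via a primary decomposition of $(0)$ and the explicit identification $K=(0:_R w_\star)=\bigcap_{i\notin B}\q_i$, whereas the paper chooses $w$ and $r$ directly by prime avoidance among the minimal primes and then raises to a power to force $wr=0$; and the paper derives the h-divisible case from the same ``Claim~1'' rather than your separate zero-dimensionality argument.
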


\begin{proof}
If $0\in W$, then $W^{-1}R$ is the zero-ring, so all modules over it are h-divisible.  Thus, we may assume that $0\notin W$.

We begin by setting up some notation: Let $X_1 := \{\p \in \Min R \mid \p \cap W \neq \emptyset \}$, and $X_2 := \Min R \setminus X_1 = \{\p \in \Min R \mid \p \cap W = \emptyset \}$.  Let $S := R \setminus (\bigcup \Min R)$ and $T := R \setminus (\bigcup X_2)$.  We will use the convention that $\bigcup \emptyset = 0$ and $\bigcap \emptyset = R$.

Next, we prove two claims:

\noindent \textbf{Claim 1:}\footnote{The idea for this claim came from the proof of \cite[Lemma 3.3(a)]{AHH}.} Take any $c\in T$ (\emph{i.e.}, $c/1$ is a non-zerodivisor of $W^{-1}R$). We can find $w\in W$ and $r\in R$ such that $wr=0$ and $wc +r$ is a non-zerodivisor of $R$.

\begin{proof}[Proof of Claim 1]
By assumption $\Min R = \Ass R$.

Since the primes in $\Min R$ are mutually incomparable, and since every $\p \in X_1$ intersects $W$, prime avoidance allows us to choose $w \in (\bigcap X_1)\cap W \setminus (\bigcup X_2)$ and $r \in (\bigcap X_2) \setminus (\bigcup X_1)$.  As $c\notin \bigcup X_2$, we have $wc \in (\bigcap X_1) \setminus (\bigcup X_2)$.

Note that $wr \in \bigcap \Min R = \sqrt{0}$, so there is some $n$ with $w^nr^n = 0$.  Replace $r$ and $w$ with $r^n$ and $w^n$ respectively and all the above conditions hold.

Now, for any $\p \in X_1$, note that $wc \in \p$ but $r\notin \p$, so that $wc +r \notin \p$.  Similarly, for any $\q \in X_2$, we have $r \in \q$ but $wc \notin \q$, so that $wc +r \notin \q$.  That is, $wc+r \notin (\bigcup X_1) \cup (\bigcup X_2) = \bigcup \Min R = \bigcup \Ass R$, which means that $wc+r$ is a non-zerodivisor of $R$.
\end{proof}

\noindent \textbf{Claim 2:} $W^{-1}Q(R) \cong Q(W^{-1}R)$, where $Q(-)$ is the function that sends each ring to its total quotient ring.

\begin{proof}[Proof of Claim 2]
Note that $S$, $T$, and $W$ are all unital semigroups

Let $SW$ (\emph{resp.} $TW$) be the unital semigroup generated by $S$ and $W$ (\emph{resp.} $T$ and $W$).  Clearly, we have $W^{-1}Q(R) \cong (SW)^{-1}R$ and $Q(W^{-1}R) \cong (TW)^{-1}R$.  Since $SW \subseteq TW$, the latter ring is a localization of the former.  On the other hand, let $c \in T=TW$ (where equality holds because $W \subseteq T$), and pick $w$, $r$ as in Claim 1.  Then $w^2c = w(wc+r) \in WS$, so that $c/1$ is already invertible in $(SW)^{-1}R = W^{-1}Q(R)$.
\end{proof}

Now suppose $M$ is divisible.  Let $c/1$ be a non-zerodivisor of $W^{-1}R$ (so that $c\in T$), let $w$, $r$ be as in Claim 1, and let $x\in M$.  Then there is some $y\in M$ such that $x=(wc+r)y$.  But then
\[
\frac{wc+r}{1} = \frac{wc}{1} + \frac{wr}{w} = \frac{wc}{1}
\]
in $W^{-1}R$, since $w\in W$ and $wr=0$.  So
\[
\frac{x}{1} = \frac{wc+r}{1} \cdot \frac{y}{1} = \frac{wc}{1} \cdot \frac{y}{1} = \frac{c}{1} \cdot \frac{wy}{1}
\]
in $W^{-1}M$.  That is, $x/1$ is divisible by $c/1$.  Thus, $W^{-1}M$ is divisible as a $(W^{-1}R)$-module.

Finally, suppose $M$ is h-divisible.  Then it is a homomorphic image of a free $Q(R)$-module $F$. But then $W^{-1}F$ is a free module over $W^{-1}Q(R) \cong Q(W^{-1}R)$ (by Claim 2), and the localization at $W$ preserves the surjection onto $M$, so $W^{-1}M$ is a $(W^{-1}R)$-homomorphic image of a free $Q(W^{-1}R)$-module.  Thus $W^{-1}M$ is h-divisible over $W^{-1}R$.
\end{proof}

\section{Coassociated primes and Hom modules along a base change}\label{sec:Coass}
Let $R$ be a commutative Noetherian ring.

MacDonald \cite{Mac-second} and Kirby \cite{Kir-cop} independently proved a dual for Artinian modules to the Lasker-Noether primary decomposition theorem for Noetherian modules.  (In the sequel, we use Macdonald's terminology.)  In this theory, a nonzero $R$-module $M$ is called \emph{secondary} if for all $r\in R$, the map $r: M \ra M$ (left multiplication by $r$) is either surjective or nilpotent.  The annihilator of such a module always has a prime radical, and if $\p = \sqrt{\ann M}$ for a secondary $R$-module $M$, $M$ is called \emph{$\p$-secondary}.  One says that an $R$-module $M$ has a \emph{secondary representation} if one may express it as a finite sum of the form: \[
M = \sum_{i=1}^n M_i,
\]
where each $M_i$ is $\p_i$-secondary, for a set $\{\p_1, \dotsc, \p_n\}$ of pairwise-distinct prime ideals.  Macdonald shows that if such a representation exists, then it has uniqueness properties dual to those enjoyed by primary decompositions.  In particular, if one takes an irredundant such representation, then the set of prime ideals involved is unique.  If such a representation exists, then the $\p_i$ are called the \emph{attached primes} of $M$, and we write $\Att M = \{\p_1, \dotsc, \p_n\}$.  Macdonald and Kirby show that any Artinian module has a secondary representation.

However, not every $R$-module has a secondary representation.  There is, however, a notion of \emph{coassociated primes} ($\Coass$) of an arbitrary $R$-module, by Chambless, dual to that of the associated primes:

\begin{defn}\cite[p. 1134]{Cha-cop} 
Let $R$ be a Noetherian ring, $M$ an $R$-module, and $P \in \Spec R$.  We say that $P$ is \emph{coassociated} to $M$ if there is some nonzero factor module $N$ of $M$ which is \emph{hollow} (\emph{i.e.} $N$ cannot be written as the sum of two proper submodules) such that $P=\{a \in R \mid aN\neq N\}$.  The set of all such primes is written $\Coass M$.
\end{defn}

We collect below some established facts about $\Coass$ and $\Att$:

\begin{prop}\label{pr:Coass}
Let $R$ be a commutative Noetherian ring: \begin{enumerate}
\item If $M \onto N$ is a surjection of $R$-modules, then $\Coass N \subseteq \Coass M$. \cite[Lemma 3 on p. 1136]{Cha-cop}
\item If $0 \ra L \ra M \ra N \ra 0$ is a short exact sequence of $R$-modules, then $\Coass M \subseteq \Coass L \cup \Coass N$. \cite[Lemma 2.1(a)]{Zoe-linkom}
\item $\bigcup \Coass M = \{x\in R \mid x M \neq M\}$. \cite[Folgerung 1 on p. 129]{Zoe-linkom}
\item $\bigcap \Coass M \supseteq \sqrt{\ann M}$. \cite[Folgerung 1 on p. 129]{Zoe-linkom}
\item If $M$ has a secondary representation, then $\Att M = \Coass M$. \cite[Theorem 1.14]{Ys-coass}
\item Let $M$ be a nonzero $R$-module.  Then $\Coass M \neq \emptyset$. \cite[Theorem 1.9]{Ys-coass}
\item Let $\p \in \Spec R$, $\q$ a $\p$-primary ideal, and $E$ an injective $R$-module.  Then $(0 :_E \q)$ is either $0$ or $\p$-secondary. \cite[Lemma 2.1]{Sha-second}
\item Let $g: R \ra S$ be a ring homomorphism, with $S$ Noetherian, and let $M$ be an $S$-module.  Consider $M$ to be an $R$-module via restriction of scalars along $g$.  Then $g^*(\Coass_SM) \subseteq \Coass_RM$.  \cite[Theorem 1.7(i)]{DivTou-coass}
\end{enumerate}
\end{prop}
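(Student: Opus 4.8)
The plan is to treat the eight items one at a time, as they are largely independent: I would give direct proofs of those that follow formally from the definitions and, for the deeper ones, reproduce the arguments of the cited sources. Items (1), (4), and the forward inclusion of (3) are essentially immediate. For (1), a nonzero hollow factor module $N'$ of $N$ with $P = \{a : aN' \neq N'\}$ is also a factor module of $M$ via $M \onto N$, and its cotype is intrinsic to $N'$, so $P \in \Coass M$. For the forward inclusion of (3), if $x$ lies in some $P \in \Coass M$ then $x$ lies in the cotype of the witnessing hollow factor $H$, so $xH \neq H$ and hence $xM \neq M$. For (4), if $x^n M = 0$ then $x^n H = 0 \neq H$ for every hollow factor $H \neq 0$ of $M$, so $x^n$, and hence $x$, lies in every $P \in \Coass M$, i.e.\ $x \in \bigcap \Coass M$. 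For (2): given $P \in \Coass M$ witnessed by a hollow factor $H = M/K$, push $L$ into $H$ to get $\bar L \subseteq H$; if $\bar L = H$ then $H$ is a hollow factor of $L$, while if $\bar L \subsetneq H$ then $\bar L$ is superfluous in $H$ (otherwise $H$ is a sum of two proper submodules), so $a(H/\bar L) = H/\bar L \iff aH = H$, whence $H/\bar L \cong M/(K+L)$ is a nonzero hollow factor module of $N$ with the same cotype $P$; either way $P \in \Coass L \cup \Coass N$.

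The item I expect to be the main obstacle is (6), the nonvanishing of $\Coass M$ for $M \neq 0$. This is the $\Coass$-analogue of the existence of associated primes, but the naive dual argument fails, since the poset of proper submodules of a non-finitely-generated module need not have a maximal element and a simple quotient of a submodule of $M$ need not be a quotient of $M$. The route I would take is Matlis-type duality: fix an injective cogenerator $E$, set $M^\vee := \Hom_R(M,E)$, observe that $M \neq 0$ forces $M^\vee \neq 0$, so $\Ass_R M^\vee \neq \emptyset$, and then show $\Ass_R M^\vee \subseteq \Coass_R M$ by dualizing a cyclic submodule of $M^\vee$ that realizes an associated prime — this is essentially Yassemi's argument. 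Granting (6), the reverse inclusion of (3) follows: if $xM \neq M$ then $M/xM \neq 0$, so $\Coass(M/xM) \neq \emptyset$ by (6), and any $P$ in it lies in $\Coass M$ by (1) and contains $x$, since $x$ kills $M/xM$.

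For the remaining items I would reproduce the cited proofs. Item (7): writing $\q$ as $\p$-primary, $(0 :_E \q)$ is a module over $R/\q$ on which the images of $R \setminus \p$ act surjectively (they act injectively on $R/\q$, so dualizing the resulting surjection into the injective $E$ gives surjectivity) and on which $\p/\q$ acts nilpotently, so it is $0$ or $\p$-secondary. Item (5): for a secondary representation $M = \sum_i M_i$ with $M_i$ being $\p_i$-secondary, the inclusion $\{\p_1,\dots,\p_n\} \subseteq \Coass M$ comes from realizing each $\p_i$ from $M_i$ modulo $\sum_{j\neq i} M_j$ (the nontrivial point being that a $\p$-secondary module has $\p$ among its coassociated primes, even though such a module need not itself be hollow), while $\Coass M \subseteq \{\p_1,\dots,\p_n\}$ follows from (1) and (2). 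Item (8) is the $\Coass$-analogue of the easy inclusion in Proposition~\ref{pr:ass}, but subtler, because a hollow $S$-factor $N$ of $M$ need not be hollow as an $R$-module; one argues with cotypes directly, using $\{r \in R : rN \neq N\} = g^{-1}(\{s \in S : sN \neq N\})$ and checking, as in Divaani-Aazar and Tousi, that this contracted prime actually occurs as a coassociated prime of $M$ over $R$. In sum, (1), the forward inclusion of (3), and (4) are formal; (2), the reverse inclusion of (3), (5), (7), and (8) are short arguments; and (6) — equivalently, the existence of a hollow factor module with prime cotype — is the substantive point, handled by passing through $\Hom(-,E)$ to the known existence of associated primes.
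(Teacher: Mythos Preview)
The paper does not prove this proposition at all: it is presented as a list of ``established facts about $\Coass$ and $\Att$,'' with each item carrying a citation to the original source (Chambless, Z\"oschinger, Yassemi, Sharp, Divaani-Aazar--Tousi), and the text moves directly from the end of the proposition to Theorem~\ref{thm:CoassHom}. So there is no in-paper argument to compare your proposal against; you are supplying proofs the authors deliberately outsourced.

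That said, your outline is largely sound and tracks the cited sources. Items (1), (2), (4), and the forward half of (3) are handled correctly from the definition of hollow quotients; your reduction of the reverse inclusion of (3) to (6) is the standard move; your treatment of (7) is exactly Sharp's argument; and your plan for (6) via $\Ass_R(M^\vee)$ is Yassemi's. For (5) your sketch is right once you note that a $\p$-secondary module $N$ has $\Coass N = \{\p\}$ (elements of $\p$ act nilpotently, hence lie in every coassociated prime by (4); elements outside $\p$ act surjectively, hence lie in none by (3); and (6) gives nonemptiness). The one place your sketch is genuinely thin is (8): you correctly observe that a hollow $S$-quotient $N$ of $M$ need not be $R$-hollow, and that $\{r\in R: rN\neq N\} = g^{-1}(Q)$, but the sentence ``checking\dots that this contracted prime actually occurs'' hides the real work. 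Knowing only (3), one gets $\bigcup \Coass_R N = g^{-1}(Q)$, which does not by itself force $g^{-1}(Q)\in \Coass_R N$. The clean way through is to use the identification $\Coass_R(-) = \Ass_R((-)^\vee)$ (which you are already invoking for (6)) together with the $\Ass$-analogue in Proposition~\ref{pr:ass}, or else to follow Divaani-Aazar--Tousi's direct argument; either way this step deserves an actual proof rather than a pointer.
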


\begin{thm}\label{thm:CoassHom}
Let $g: R \ra S$ be a map of Noetherian rings.  Let $L$ be a finite $R$-module, and let $M$ be an $S$-module which is injective over $R$ (via the $R$-module structure given by restriction of scalars).
\begin{enumerate}
 \item\label{it:Coassp} For any $\p \in \Spec R$, we have $g^*(\Coass_S (0 :_M \p)) = \Coass_R (0:_M 	\p) \subseteq \{\p\}$.
 \item\label{it:CoassHom} $\Coass_S \Hom_R(L,M) = \bigcup_{\p \in \Ass_RL} \Coass_S (0 :_M \p)$.
\end{enumerate}
\end{thm}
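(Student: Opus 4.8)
The plan is to prove (\ref{it:Coassp}) first and then leverage it, together with a Matlis-type duality reduction to Theorem~\ref{thm:matflat}, to obtain (\ref{it:CoassHom}). For (\ref{it:Coassp}): write $(0:_M\p)=\Hom_R(R/\p,M)$ and note that $\p$ is trivially a $\p$-primary ideal, so by Proposition~\ref{pr:Coass}(7) applied to the $R$-injective module $M$, either $(0:_M\p)=0$ or $(0:_M\p)$ is $\p$-secondary over $R$; in the latter case Proposition~\ref{pr:Coass}(5) gives $\Coass_R(0:_M\p)=\Att_R(0:_M\p)=\{\p\}$. Either way $\Coass_R(0:_M\p)\subseteq\{\p\}$. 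For the $S$-refinement, Proposition~\ref{pr:Coass}(8) gives $g^*(\Coass_S(0:_M\p))\subseteq\Coass_R(0:_M\p)\subseteq\{\p\}$; and the first inclusion is an equality by Proposition~\ref{pr:Coass}(6): if $\Coass_R(0:_M\p)=\emptyset$ then $(0:_M\p)=0$ and so $\Coass_S(0:_M\p)=\emptyset$ too, while if $\Coass_R(0:_M\p)=\{\p\}$ then $(0:_M\p)\neq0$, hence $\Coass_S(0:_M\p)\neq\emptyset$ and maps onto $\{\p\}$.

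For (\ref{it:CoassHom}), the inclusion ``$\supseteq$'' is immediate in both settings: each $\p\in\Ass_R L$ gives an embedding $R/\p\hookrightarrow L$, hence (as $\Hom_R(-,M)$ is exact) an $S$-linear surjection $\Hom_R(L,M)\onto\Hom_R(R/\p,M)=(0:_M\p)$, so $\Coass_S(0:_M\p)\subseteq\Coass_S\Hom_R(L,M)$ by Proposition~\ref{pr:Coass}(1); take the union over $\p\in\Ass_R L$. For ``$\subseteq$'' I would first settle the case $S=R$ by duality. Fix an injective cogenerator $E$ of $R$ and put $(-)^\vee:=\Hom_R(-,E)$. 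Since $L$ is finite (hence finitely presented), comparing a finite presentation of $L$ with the exact functor $(-)^\vee$ yields a natural isomorphism $\Hom_R(L,M)^\vee\cong L\otimes_R M^\vee$, and in particular $(0:_M\p)^\vee\cong M^\vee/\p M^\vee$. Now $M^\vee$ is $R$-flat by Proposition~\ref{pr:duality}(2), and $\Coass_R(-)=\Ass_R((-)^\vee)$ (the standard duality between coassociated and associated primes; see e.g.\ \cite{Ys-coass}), so
\[
\Coass_R\Hom_R(L,M)=\Ass_R(L\otimes_R M^\vee)=\bigcup_{\p\in\Ass_R L}\Ass_R(M^\vee/\p M^\vee)=\bigcup_{\p\in\Ass_R L}\Coass_R(0:_M\p),
\]
the middle step being Theorem~\ref{thm:matflat}(2) for the flat module $M^\vee$; by (\ref{it:Coassp}) this set lies in $\Ass_R L$.

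To upgrade ``$\subseteq$'' to an arbitrary $g\colon R\to S$, choose a prime filtration $0=L_0\subset L_1\subset\cdots\subset L_n=L$ with $L_i/L_{i-1}\cong R/\p_i$. Applying the exact functor $\Hom_R(-,M)$ to $0\to L_{i-1}\to L_i\to R/\p_i\to0$ gives short exact sequences of $S$-modules $0\to(0:_M\p_i)\to\Hom_R(L_i,M)\to\Hom_R(L_{i-1},M)\to0$, so iterating Proposition~\ref{pr:Coass}(2) yields $\Coass_S\Hom_R(L,M)\subseteq\bigcup_{i=1}^n\Coass_S(0:_M\p_i)$. Given $Q$ in the left-hand side, it lies in some $\Coass_S(0:_M\p_j)$, so $g^{-1}(Q)=\p_j$ by (\ref{it:Coassp}); on the other hand $g^{-1}(Q)=g^*(Q)\in\Coass_R\Hom_R(L,M)\subseteq\Ass_R L$ by Proposition~\ref{pr:Coass}(8) and the $S=R$ case just established. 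Hence $\p_j\in\Ass_R L$, and $Q\in\Coass_S(0:_M\p_j)\subseteq\bigcup_{\p\in\Ass_R L}\Coass_S(0:_M\p)$, which completes the proof.

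The crux is the ``$\subseteq$'' direction of (\ref{it:CoassHom}): a bare induction along the prime filtration only produces the over-large bound $\bigcup_i\Coass_S(0:_M\p_i)$, since the $\p_i$ appearing in a prime filtration of $L$ need not be associated primes of $L$ (already for $L$ a non-principal ideal of a polynomial ring no such filtration exists). This is exactly the analogue, for $\Coass$ and injective modules, of the difficulty handled for $\Ass$ and flat modules in Theorem~\ref{thm:matflat}, and the point of the duality detour through $M^\vee$ is precisely to inherit that theorem's resolution rather than reprove it. Secondary points to watch: the isomorphism $\Hom_R(L,M)^\vee\cong L\otimes_R M^\vee$ genuinely uses that $L$ is finitely presented; any injective cogenerator of $R$ serves equally well; and every map in the filtration argument is $S$-linear, which holds because $M$ carries its $S$-module structure throughout.
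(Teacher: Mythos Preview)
Your proof is correct, and both parts are sound. Part~(\ref{it:Coassp}) matches the paper's argument essentially verbatim. For part~(\ref{it:CoassHom}) the ``$\supseteq$'' direction is also the same, but your ``$\subseteq$'' argument follows a genuinely different route.

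The paper never passes through duality or Theorem~\ref{thm:matflat}. Instead it takes a primary decomposition $0=K_1\cap\cdots\cap K_n$ in $L$ with $L/K_i$ being $\p_i$-coprimary, embeds $L\hookrightarrow\bigoplus L/K_i$, and thereby reduces to the case where $L$ is $\p$-coprimary. In that case a direct computation using Proposition~\ref{pr:Coass}(3) and~(4) (the paper's Claim~1) shows $g^{-1}(P)=\p$ for any $P\in\Coass_S\Hom_R(L,M)$; then a prime filtration of $L$ (Claim~2) pins $P$ in some $\Coass_S(0:_M\q_j)$, forcing $\q_j=\p$ by part~(\ref{it:Coassp}). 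Your approach replaces the primary-decomposition step and Claim~1 with a Matlis-duality reduction of the $S=R$ case to Theorem~\ref{thm:matflat}, via $\Hom_R(L,M)^\vee\cong L\otimes_R M^\vee$ and the identity $\Coass_R(-)=\Ass_R((-)^\vee)$, and then bootstraps to general $S$ with a single prime filtration plus Proposition~\ref{pr:Coass}(8). The trade-off: your argument is shorter and conceptually attractive because it inherits the hard step from Theorem~\ref{thm:matflat} rather than reproving it, but it imports the fact $\Coass_R N=\Ass_R(N^\vee)$ from \cite{Ys-coass}, which the paper does not list among its tools; the paper's proof stays entirely within the Proposition~\ref{pr:Coass} toolbox and is self-contained at the cost of two extra claims.
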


\begin{proof}
To prove (\ref{it:Coassp}) we use parts (5)--(8) of Proposition~\ref{pr:Coass}.  Namely, let $L := (0:_M \p) $.  If $L=0$, then both $\Coass_RL$ and $\Coass_SL$ are empty.  Otherwise, by \ref{pr:Coass}(6), we have $\Coass_SL \neq \emptyset$ and $\Coass_RL \neq \emptyset$.  But then by \ref{pr:Coass}(7), $L$ is $\p$-secondary as an $R$-module, so that by \ref{pr:Coass}(5), $\Coass_RL = \{\p\}$.  Then by \ref{pr:Coass}(8), $\emptyset \neq g^*(\Coass_SL) \subseteq \Coass_RL= \{\p\}$, so that $g^*(\Coass_SL) = \{\p\}$, completing the proof.

It remains to prove (\ref{it:CoassHom}).  For one inclusion, let $\p \in \Ass_RL$. Then there is an injection $0 \ra R/\p \ra L$, which when we apply $\Hom_R(-,M)$, yields an exact sequence \[
\Hom_R(L,M) \ra (0 :_M \p) \ra 0.
\]
Now it follows from Proposition~\ref{pr:Coass}(1) that $\Coass_S (0 :_M \p) \subseteq \Coass_S \Hom_R(L,M)$.

For the reverse inclusion, let $P \in \Coass_S \Hom_R(L,M)$.  Since $L$ is finitely generated and $R$ is Noetherian, setting $\Ass_R L = \{\p_1,\dotsc, \p_n\}$ the $R$-submodule $0\subseteq L$ has a minimal primary decomposition
\[
0 = K_1 \cap \cdots \cap K_n,
\]
such that $L/K_i$ is $\p_i$-coprimary.  We get an injection
\[
0 \ra L \hookrightarrow \bigoplus_{i=1}^n \frac{L}{K_i},
\]
which leads to a surjection
\[
\bigoplus_{i=1}^n \Hom_R(L/K_i,M) \twoheadrightarrow \Hom_R(L,M) \ra 0.
\]
Thus, by Proposition~\ref{pr:Coass}(1 and 2)
\begin{align*}
\Coass_S \Hom_R(L,M) &\subseteq \Coass_S (\bigoplus_{i=1}^n \Hom_R(L/K_i,M)) \\
&= \bigcup_{i=1}^n \Coass_S \Hom_R(L/K_i,M) 
\end{align*}
Since $P \in \Coass_S \Hom_R(L,M)$, we have $P \in \Coass_S \Hom_R(L/K_i,M)$ for some $i$.  Let $L' := L/K_i$ for this choice of $i$, and let $\p := \p_i$.

\noindent \textbf{Claim 1:} $g^{-1}(P) = \p$. (That is, $g^*(P) = \p$.)

\begin{proof}[Proof of Claim 1]
Since $L'$ is $\p$-coprimary, there is some positive integer $k$ such that $\p^k L' = 0$.  Hence $g(\p)^k \Hom_R(L',M) = 0$.  That is, $g(\p) \subseteq \sqrt{\ann_S \Hom_R(L',M)}$.  But then by Proposition~\ref{pr:Coass}(4),
 $g(\p) \subseteq Q$ for every $Q \in \Coass_S \Hom_R(L',M)$, so that in particular, $\p \subseteq g^{-1}(P)$.

Conversely, let $x \in R \setminus \p$.  Since $L'$ is $\p$-coprimary, we get an exact sequence pf $R$-modules
\[
 0 \ra L' \arrow{x} L',
\]
which induces an exact sequence of $S$-modules:
\[
 \Hom_R(L',M) \arrow{g(x)} \Hom_R(L',M) \ra 0
\]
But then by Proposition~\ref{pr:Coass}(3), $g(x)$ is not in any coassociated prime of $\Hom_R(L',M)$.  In particular, $g(x) \notin P$, whence $x \notin g^{-1}(P)$.  This shows that $g^{-1}(P) \subseteq \p$, which completes the proof of the Claim.
\end{proof}

Since $L'$ is finitely generated over the Noetherian ring $R$, there is a prime filtration
\[
 0 = N_0 \subset N_1 \subset \cdots \subset N_t = L',
\]
such that for each $1\leq j \leq t$, $N_j / N_{j-1} \cong R/\q_j$, where the $\q_j$ are prime ideals of $R$.

\noindent \textbf{Claim 2:} For each $1\leq j \leq t$,
\[
\Coass_S \Hom_R(N_j,M) \subseteq \bigcup_{l=1}^j \Coass_S \Hom_R(R/\q_l,M)
\]

\begin{proof}[Proof of Claim 2]
We proceed by induction on $j$.  When $j=1$, we have $N_1 \cong R/q_1$, so there is nothing to prove.

So let $j>1$, and assume the claim for $j-1$.  We have a short exact sequence of $R$-modules
\[
0 \ra N_{j-1} \ra N_j \ra R/\q_j \ra 0,
\]
which induces a short exact sequence of $S$-modules
\[
0 \ra \Hom_R(R/\q_j, M) \ra \Hom_R(N_j,M) \ra \Hom_R(N_{j-1},M) \ra 0.
\]
Thus, by Proposition~\ref{pr:Coass}(2),
\begin{align*}
\Coass_S \Hom_R(N_j,M) &\subseteq (\Coass_S \Hom_R(N_{j-1},M)) \cup \Coass_S \Hom_R(R/\q_j,M) \\
&\subseteq \bigcup_{l=1}^j \Coass_S \Hom_R(R/\q_l,M),
\end{align*}
as was to be shown.
\end{proof}

In particular, since $L'=N_t$,
\[
\Coass_S \Hom_R(L',M) \subseteq \bigcup_{j=1}^t  \Coass_S \Hom_R(R/\q_j,M),
\]
so that since $P \in \Coass_S \Hom_R(L',M)$, there is some $j$ such that
\[
 P \in \Coass_S \Hom_R(R/\q_j,M) = \Coass_S (0:_M \q_j).
\]
But then by part~(\ref{it:Coassp}), $g^{-1}(P) \in \Coass_R(0:_M \q_j) \subseteq \{\q_j\}$, so that $\p = g^{-1}(P) = \q_j$, and $P \in \Coass_S(0:_M \p)$, completing the proof since $\p \in \Ass_RL$.
\end{proof}

\section{Injectivity criteria}\label{sec:inj}

We begin this section with a partial dual of Theorem~\ref{thm:localflat}.  This result may be known to experts; however, we could not find it in the literature, and so we give a proof of it here for completeness.

\begin{thm}[A local injectivity criterion]\label{thm:localinj}
Let $(R,\m,k) \ra (S, \n,\ell)$ be a local homomorphism of Noetherian local rings, and let $M$ be an \emph{Artinian} $S$-module.  Then $M$ is injective over $R$ if and only if $\Ext^1_R(k,M) = 0$.
\end{thm}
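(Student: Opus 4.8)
The forward implication is immediate ($M$ injective $\Rightarrow \Ext^1_R(k,M)=0$), so assume $\Ext^1_R(k,M)=0$; I would prove $M$ injective in a few steps. \emph{First,} $M$ is $\m$-power torsion as an $R$-module: since $M$ is Artinian over the Noetherian local ring $S$, for $x\in M$ the chain $Sx\supseteq\n x\supseteq\n^2 x\supseteq\cdots$ stabilizes at some $\n^t x=\n^{t+1}x$, and as $\n^t x$ is a finite $S$-module satisfying $\n\cdot(\n^t x)=\n^t x$, Nakayama forces $\n^t x=0$; since $R\ra S$ is local, $\m^t S\subseteq\n^t$, so $\m^t x=0$. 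In particular $\Ass_R M\subseteq\{\m\}$. \emph{Second,} $\Ext^1_R(T,M)=0$ for every finite-length $R$-module $T$: induct on $\len_R T$, the cases $\len_R T\le1$ being trivial or the hypothesis, and for $\len_R T>1$ pick a simple submodule $T'\cong k$ and use $\Ext^1_R(T/T',M)\ra\Ext^1_R(T,M)\ra\Ext^1_R(T',M)$, whose ends vanish by induction and by hypothesis.

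\emph{Third} --- and this is the main step --- I claim every short exact sequence $0\ra M\ra Y\ra C\ra0$ with $C$ an $\m$-power torsion $R$-module splits, equivalently $\Ext^1_R(C,M)=0$ for all such $C$. Here $Y$ is again $\m$-power torsion (if the image of $y\in Y$ in $C$ is killed by $\m^s$ then $\m^s y$ is a finite submodule of $M$, hence killed by some $\m^t$, so $\m^{s+t}y=0$). Extend $\mathrm{id}_M$ along $M\into Y$ to a Zorn-maximal $g\colon Y'\ra M$ with $M\subseteq Y'\subseteq Y$ and $g|_M=\mathrm{id}_M$; if $Y'\neq Y$, choose $y\in Y\setminus Y'$, note that $Ry$ is cyclic and $\m$-power torsion hence of finite length, so $Ry/(Ry\cap Y')$ has finite length, and by the Second step the obstruction $\Ext^1_R(Ry/(Ry\cap Y'),M)$ to extending $g|_{Ry\cap Y'}$ across $Ry\cap Y'\into Ry$ vanishes; gluing gives an extension of $g$ to $Y'+Ry\supsetneq Y'$, contradicting maximality. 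So $Y'=Y$ and $g$ retracts the inclusion.

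\emph{Finally,} let $E:=E_R(M)$; since $\Ass_R E=\Ass_R M\subseteq\{\m\}$, the structure theory of injectives over a Noetherian ring gives $E\cong E_R(k)^{(\Lambda)}$, which is $\m$-power torsion, hence so is $C:=E/M$, so by the Third step $0\ra M\ra E\ra C\ra0$ splits and $M$ is a direct summand of an injective module, i.e.\ injective. The main obstacle is the Third step: turning the cheap cohomological input of the first two steps into honest relative injectivity needs the transfinite bookkeeping, and one must check that each adjoined cyclic submodule contributes only a finite-length obstruction. One could instead try reducing to the complete case and applying Matlis duality over $S$ to convert the problem into a flatness statement handled by the Local Flatness Criterion, but the behaviour of $E_S(\ell)$ over $R$ and the fact that $M$ need not be Artinian over $R$ make that route more delicate, so I would prefer the direct argument.
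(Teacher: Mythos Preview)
Your proof is correct and takes a genuinely different route from the paper's. The paper does precisely what you dismiss in your final paragraph: it reduces to $S$ complete, sets $(-)^\vee := \Hom_S(-, E_S(\ell))$, and observes that for any ideal $I$ of $R$ one has a natural isomorphism $\Ext^1_R(R/I,M)^\vee \cong \Tor_1^R(R/I, M^\vee)$ (valid because $I$ is finitely presented and $E_S(\ell)$ is $S$-injective). The hypothesis $\Ext^1_R(k,M)=0$ thus becomes $\Tor_1^R(k, M^\vee)=0$; since $M^\vee$ is a finite $S$-module by Matlis duality, the Local Flatness Criterion gives $M^\vee$ flat over $R$, whence $\Ext^1_R(R/I,M)^\vee=0$ for all $I$, and since $E_S(\ell)$ is a cogenerator and $\Ext^1_R(R/I,M)$ is an $S$-module, Baer's criterion finishes. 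Your worry that this route is ``more delicate'' is somewhat misplaced: one never needs $M$ Artinian over $R$, nor any control on $E_S(\ell)$ as an $R$-module---the only point is that each $\Ext^1_R(R/I,M)$ is naturally an $S$-module, so the $S$-dual detects its vanishing. Your argument, by contrast, is more elementary and self-contained: it avoids Matlis duality and the Local Flatness Criterion entirely, instead using the $\m$-power torsion structure of $M$ and of $E_R(M)$ together with a Zorn argument to upgrade ``$\Ext^1$ vanishes on finite-length modules'' to ``$\Ext^1$ vanishes on $\m$-torsion modules.'' Both approaches ultimately exploit the same finiteness ($M$ Artinian over $S$), but the paper leverages it through duality to reach Theorem~\ref{thm:localflat}, while you stay on the injective side throughout.
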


\begin{proof}
We only need to prove the ``if'' direction.  So suppose $\Ext^1_R(k, M) = 0$, and assume for the moment that $S=\hat{S}$ -- that is, $S$ is $\n$-adically complete.  Let $E' := E_S(\ell)$.

First note that for any ideal $I$ of $R$, we have \[
\Hom_S(\Ext^1_R(R/I,M), E') \cong \Tor_1^R(R/I, \Hom_S(M,E')).
\]
To see this, consider the (natural) right-exact sequence \[
M \ra \Hom_R(I, M) \ra \Ext^1_R(R/I,M) \ra 0,
\]
then apply the exact functor $\Hom_S(-, E')$, and note that $\Hom_S(\Hom_R(I,M), E') \cong I \otimes_R \Hom_S(M,E')$ in a natural way.

Thus, $\Tor_1^R(k, \Hom_S(M,E')) = \Hom_S(\Ext^1_R(k,M),E') = \Hom_S(0,E')= 0$, but by Matlis duality we have that $\Hom_S(M,E')$ is a finite $S$-module, so by the Local Flatness Criterion, we have that $\Hom_S(M,E')$ is flat as an $R$-module.  In particular,
for any ideal $I$ of $R$, we have \[
0 = \Tor_1^R(R/I, \Hom_S(M, E')) \cong \Hom_S(\Ext^1_R(R/I,M), E').
\]
But since $E'$ is an injective \emph{cogenerator} in the category of $S$-modules, it follows that $\Ext^1_R(R/I,M) = 0$.  Since $I$ was arbitrary, Baer's criterion then shows that $M$ is injective over $R$.

Finally, remove the assumption that $S$ is complete.  Since $M$ is Artinian over $S$, it has a natural structure as an Artinian module over $\hat{S}$, in such a way that the restriction of scalars to $S$ gives the original $S$-module structure of $M$.  Thus, we may replace the map $R \ra S$ with the composite map $R \ra S \ra \hat{S}$, and then by the above argument, it follows that $M$ is injective over $R$.
\end{proof}

\begin{thm}\label{thm:injgeneral}
Let $R$ be a Noetherian ring and $M$ an $R$-module.  Let $Q$ be the total quotient ring of $R$.  The following are equivalent: \begin{enumerate}[label=(\alph*)]
\item $M$ is injective.
\item $\Hom_R(Q,M)$ is injective over $Q$, and $\Hom_R(L,M)$ is h-divisible for every torsion-free $R$-module $L$.
\item $\Hom_R(Q,M)$ is injective over $Q$, and $\Hom_R(P,M)$ is h-divisible for every $P \in \Spec R$.
\end{enumerate}
Moreover, if $(R,\m,k) \ra (S,\n,\ell)$ is a local homomorphism of Noetherian local rings and $M$ is an Artinian $S$-module, then the following condition is also equivalent to the above three:\begin{enumerate}
\item[(c$'$)] $\Hom_R(Q,M)$ is injective over $Q$, and $\Hom_R(\m,M)$ is an h-divisible $R$-module.
\end{enumerate}
\end{thm}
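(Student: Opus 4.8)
The plan is to run the proof of Theorem~\ref{thm:flatgeneral} through the tensor-hom duality: wherever that argument extends scalars along the flat map $R\to Q$ by tensoring, I would instead apply the functor $\Hom_R(Q,-)$, and wherever it uses the canonical localization map $L\to L\otimes_R Q$, I would use the canonical evaluation map $e\colon\Hom_R(Q,N)\to N$, $f\mapsto f(1)$. Concretely I would establish (a)$\implies$(b)$\implies$(c)$\implies$(a), and in the local situation add the loop (c)$\implies$(c$'$)$\implies$(a).

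For (a)$\implies$(b): the functor $\Hom_R(Q,-)$ is right adjoint to the (exact) restriction of scalars along $R\to Q$, hence it sends injective $R$-modules to injective $Q$-modules, so $\Hom_R(Q,M)$ is injective over $Q$. If $L$ is torsion-free then $L\into L\otimes_R Q$, and applying the exact functor $\Hom_R(-,M)$ gives a surjection of $\Hom_R(L\otimes_R Q,M)$ onto $\Hom_R(L,M)$; since $L\otimes_R Q$ is a $Q$-module, so is $\Hom_R(L\otimes_R Q,M)$, and Lemma~\ref{lem:hdiv} then shows $\Hom_R(L,M)$ is h-divisible. The implication (b)$\implies$(c) is immediate, since every prime ideal is a torsion-free $R$-module.

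The substantive direction is (c)$\implies$(a). By Baer's criterion in the prime form recalled in the Introduction, it suffices to prove $\Ext^1_R(R/P,M)=0$ for each $P\in\Spec R$. Applying $\Hom_R(-,M)$ to $0\to P\to R\to R/P\to 0$ and using $\Ext^1_R(R,M)=0$ identifies $\Ext^1_R(R/P,M)$ with $\coker\bigl(\rho\colon M\to\Hom_R(P,M)\bigr)$, where $\rho$ is restriction. I would then consider the commutative square
\[
\begin{CD}
\Hom_R(Q,M) @>{\Hom_R(Q,\rho)}>> \Hom_R(Q,\Hom_R(P,M)) \\
@VVV @VVV \\
M @>{\rho}>> \Hom_R(P,M)
\end{CD}
\]
in which both vertical arrows are the evaluation $e$ (commutativity being the naturality of $e$). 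The right-hand vertical is surjective because $\Hom_R(P,M)$ is h-divisible (Lemma~\ref{lem:hdiv}). The top arrow $\Hom_R(Q,\rho)$ I would identify, via the tensor-hom adjunction together with the restriction-extension adjunction along $R\to Q$, with $\Hom_Q\bigl(j,\Hom_R(Q,M)\bigr)$, where $j\colon Q\otimes_R P\to Q$ is the natural map of $Q$-modules; since $Q$ is flat over $R$, $j$ is injective, and since $\Hom_R(Q,M)$ is injective over $Q$, $\Hom_Q(j,\Hom_R(Q,M))$ is surjective. Hence the composite along the top and right edges, $\Hom_R(Q,M)\to\Hom_R(P,M)$, is surjective; as it equals $\rho\circ e$, the map $\rho$ is surjective, so $\Ext^1_R(R/P,M)=0$.

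For the local statement, (c)$\implies$(c$'$) is trivial since $\m$ is prime. For (c$'$)$\implies$(a) I would run the same square with $P=\m$: the two hypotheses of (c$'$) make the right vertical and the top arrow surjective, so $\rho\colon M\to\Hom_R(\m,M)$ is surjective and $\Ext^1_R(R/\m,M)=0$; then Theorem~\ref{thm:localinj}, whose hypotheses ($M$ Artinian over $S$ and $R\to S$ a local map of Noetherian local rings) are exactly the present ones, gives that $M$ is injective over $R$. I expect the main obstacle to be the adjunction-chasing identification of $\Hom_R(Q,\rho)$ with $\Hom_Q(j,\Hom_R(Q,M))$, since the whole argument hinges on this being the precise mechanism by which the hypothesis ``$\Hom_R(Q,M)$ is injective over $Q$'' becomes surjectivity of the top edge. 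A shortcut through the duality functor $(-)^\vee$ and Theorem~\ref{thm:flatgeneral} is tempting, but it seems to break down because $M^\vee$ need not be finitely generated, so I would carry out the direct dualization instead.
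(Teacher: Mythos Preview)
Your proposal is correct and follows essentially the same route as the paper's proof: the same commutative square (up to the adjunction $\Hom_R(Q\otimes_R P,M)\cong\Hom_R(Q,\Hom_R(P,M))$, which the paper also invokes), the same use of flatness of $Q$ and injectivity of $\Hom_R(Q,M)$ over $Q$ to make the top edge surjective, and the same appeal to Lemma~\ref{lem:hdiv} for the right edge. The adjunction identification you flag as a potential obstacle is exactly the one the paper carries out, and your handling of (c$'$) via Theorem~\ref{thm:localinj} matches as well.
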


\begin{proof}
(a) $\implies$ (b): For the first statement, recall that if $A \ra B$ is any ring homomorphism and $I$ is an injective $A$-module, then $\Hom_A(B,I)$ is an injective $B$-module.
For the second statement, let $L$ be a torsion-free $R$-module.  Since $L$ is torsion-free, the natural map $L \ra L \otimes_R Q$ is injective.    Since $M$ is an injective module, this implies that the map $C := \Hom_R(L \otimes_R Q, M) \ra \Hom_R(L,M)$ is surjective.  But as $C$ is a $Q$-module, Lemma~\ref{lem:hdiv} now implies that $\Hom_R(L,M)$ is h-divisible.

(b) $\implies$ (c): Every ideal of $R$ is torsion-free.

(c) $\implies$ (c$'$): Obvious.

(c) (or (c$'$)) $\implies$ (a): By a version of Baer's criterion, we need only show that the natural map $\Hom_R(R,M) \ra \Hom_R(P,M)$ is surjective for every $P \in \Spec R$.  (In the situation of (c$'$), it suffices (by Theorem~\ref{thm:localinj}) to show this map is surjective for $P=\m$.  So in that case, we fix the notation $P=\m$ for the remainder of the proof.)  Consider the commutative square:\[ \begin{CD}
\Hom_R(Q, M) @>h>> \Hom_R(Q \otimes_R P, M)\\
@Vg'VV                 @VgVV \\
\Hom_R(R,M) @>f>> \Hom_R(P,M)
\end{CD}
\]
Since the natural map $P\ra R$ is injective and $Q$ is flat over $R$, the corresponding map $Q \otimes_R P \ra Q$ of $Q$-modules is also injective.  Then since $\Hom_R(Q,M)$ is injective over $Q$, we have that \[
\Hom_Q(Q, \Hom_R(Q,M)) \ra \Hom_Q(Q \otimes_R P, \Hom_R(Q,M))
\]
is surjective.  But up to canonical isomorphism, the displayed map may be identified with the map $h$.  Hence $h$ is surjective.

On the other hand, the source module of $g$ is canonically isomorphic to the module $\Hom_R(Q, \Hom_R(P,M))$, in such a way that for any $\alpha: Q \ra \Hom_R(P,M)$, $g(\alpha) = \alpha(1)$.  By Lemma~\ref{lem:hdiv}, the fact that $\Hom_R(P,M)$ is h-divisible means that $g$ is surjective.

We have shown that $g \circ h = f \circ g'$ is surjective, which implies that $f$ is surjective, as required.
\end{proof}

In the reduced case, we get a longer list of equivalent conditions for injectivity, as well as some equivalent conditions for being an injective cogenerator.

\begin{thm}\label{thm:injred}
Let $R$ be a reduced Noetherian ring and let $M$ be an $R$-module.  The following are equivalent: \begin{enumerate}[label=(\alph*)]
\item $M$ is injective
\item $\Coass_R \Hom_R(L,M) \subseteq \Ass_R L$ for every finitely generated $R$-module $L$.
\item $\Hom_R(L,M)$ is h-divisible for every torsion-free module $L$.
\item $\Hom_R(L,M)$ is divisible for every finite torsion-free module $L$.
\item $\Hom_R(P,M)$ is h-divisible for every $P \in \Spec R$.
\item $\Hom_R(P,M)$ is divisible for every $P \in \Spec R$.
\item $\Ext^1_R(R/P,M)$ is divisible for every $P \in \Spec R$.
\end{enumerate}
Moreover, if $(R,\m,k) \ra (S,\n,\ell)$ is a local homomorphism of Noetherian local rings and $M$ is an Artinian $S$-module, then the following conditions are also equivalent to $M$ being injective: \begin{enumerate}
\item[(f$'$)] $\Hom_R(\m,M)$ is a divisible $R$-module.
\item[(g$'$)] $\Ext^1_R(k,M)$ is a divisible $R$-module.
\end{enumerate}
In any case, the following are equivalent: \begin{enumerate}[label=(\roman*)]
\item $M$ is an injective cogenerator.
\item $\Coass_R \Hom_R(L,M) = \Ass_R L$ for every finitely generated $R$-module $L$.
\item $M$ is injective and $\Coass_R \Hom_R(L,M) = \Ass_RL$ whenever $L$ is a simple $R$-module.
\end{enumerate}
\end{thm}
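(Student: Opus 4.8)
The strategy is to dualize the proof of Theorem~\ref{thm:flatred}: deduce the equivalences from Theorem~\ref{thm:injgeneral} the way Theorem~\ref{thm:flatred} was deduced from Theorem~\ref{thm:flatgeneral}, and use Theorem~\ref{thm:CoassHom} as the $\Coass$/$\Hom$ analogue of the $\Ass$/$\otimes$ statement Theorem~\ref{thm:matflat}. The first point is that since $R$ is reduced, $Q := Q(R)$ is a finite product of fields, hence semisimple; thus \emph{every} $Q$-module, in particular $\Hom_R(Q,M)$, is injective over $Q$, so the ``$\Hom_R(Q,M)$ injective over $Q$'' hypothesis in Theorem~\ref{thm:injgeneral} is automatic. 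Consequently Theorem~\ref{thm:injgeneral} already gives (a) $\iff$ (c) $\iff$ (e) (these being conditions (b) and (c) of that theorem with the now-vacuous clause deleted).

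Next I would thread in the remaining conditions by a chain of easy implications. For (a) $\Rightarrow$ (b): apply Theorem~\ref{thm:CoassHom} with $g = \mathrm{id}_R$ and $S=R$ to get, for every finite $L$, $\Coass_R\Hom_R(L,M) = \bigcup_{\p\in\Ass_R L}\Coass_R(0:_M\p) \subseteq \Ass_R L$, using part~(1). The implications (c) $\Rightarrow$ (d), (c) $\Rightarrow$ (e), (e) $\Rightarrow$ (f), and (d) $\Rightarrow$ (f) are immediate (h-divisible implies divisible; ideals and, by Noetherianity, primes are finite torsion-free modules). For (b) $\Rightarrow$ (f): for $P\in\Spec R$, condition (b) gives $\Coass_R\Hom_R(P,M)\subseteq\Ass_R P\subseteq\Ass_R R$; since $\bigcup\Ass_R R$ is exactly the set of zerodivisors of $R$, Proposition~\ref{pr:Coass}(3) shows no non-zerodivisor lies in any coassociated prime of $\Hom_R(P,M)$, i.e.\ every non-zerodivisor acts surjectively on it, so $\Hom_R(P,M)$ is divisible. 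For (f) $\Rightarrow$ (g): $\Ext^1_R(R/P,M)$ is a quotient of $\Hom_R(P,M)$ via $0\to P\to R\to R/P\to 0$, and divisibility passes to quotients. These give (a) $\Rightarrow$ (b) $\Rightarrow$ (f), (a) $\Rightarrow$ (c) $\Rightarrow$ (d) $\Rightarrow$ (f), (c) $\Rightarrow$ (e) $\Rightarrow$ (f), and (f) $\Rightarrow$ (g), so the entire list collapses once (g) $\Rightarrow$ (a) is proved.

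The proof of (g) $\Rightarrow$ (a) is the main obstacle. By Baer's criterion it suffices to show $\Ext^1_R(R/P,M) = 0$ for every prime $P$; write $N := \Ext^1_R(R/P,M)$, an $R/P$-module. If $P$ is \emph{not} minimal then $P$ strictly contains some minimal prime and is contained in none, so $P\not\subseteq\bigcup\Min R$ and hence (prime avoidance) $P$ contains a non-zerodivisor $s$; then $sN = 0$, while (g) gives $sN = N$, so $N = 0$. From this and a prime-filtration argument it follows that $\Ext^1_R(R/I,M) = 0$ for \emph{every} ideal $I$ contained in no minimal prime. Now suppose $P = \q_i$ is a minimal prime and put $J := \bigcap_{j\ne i}\q_j$, so $\q_i\cap J = \sqrt 0 = 0$; this yields a short exact sequence $0\to R\to R/\q_i\oplus R/J\to R/(\q_i + J)\to 0$, and applying $\Hom_R(-,M)$ (with $\Ext^1_R(R,M) = 0$) gives a surjection $\Ext^1_R(R/(\q_i + J),M)\onto \Ext^1_R(R/\q_i,M)\oplus\Ext^1_R(R/J,M)$. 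Since $\q_i + J$ lies in no minimal prime (by incomparability of minimal primes), the source vanishes, whence $\Ext^1_R(R/\q_i,M) = 0$. So $\Ext^1_R(R/P,M) = 0$ for all primes $P$, and $M$ is injective. The delicate point is precisely that mere divisibility of $\Ext^1_R(R/P,M)$ forces vanishing only when $P$ contains a non-zerodivisor, so the minimal primes must be handled separately, using reducedness and the finiteness of $\Min R$.

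Finally, the injective-cogenerator equivalences (i)--(iii) and the Artinian-local additions (f$'$),(g$'$) are lighter. For (i) $\Rightarrow$ (ii): if $M$ is an injective cogenerator then each $(0:_M\p) = \Hom_R(R/\p,M)$ is nonzero, so by Proposition~\ref{pr:Coass}(6) and Theorem~\ref{thm:CoassHom}(1) we get $\Coass_R(0:_M\p) = \{\p\}$, and then Theorem~\ref{thm:CoassHom}(2) gives $\Coass_R\Hom_R(L,M) = \Ass_R L$ for finite $L$. For (ii) $\Rightarrow$ (iii): (ii) contains condition (b), so $M$ is injective by the above, and (ii) for simple $L$ is the remaining clause of (iii). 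For (iii) $\Rightarrow$ (i): taking $L = R/\m$ shows $\Hom_R(R/\m,M)\ne 0$ for every maximal $\m$, and an injective module over a Noetherian ring with this property is a cogenerator. In the Artinian-local setting, (a) $\Rightarrow$ (f$'$) since $\m\in\Spec R$ and (a) $\Rightarrow$ (f); (f$'$) $\Rightarrow$ (g$'$) since $\Ext^1_R(k,M)$ is a quotient of $\Hom_R(\m,M)$ and divisibility passes to quotients; and (g$'$) $\Rightarrow$ (a) because $\Ext^1_R(k,M)$ is killed by $\m$, so if $R$ is not a field --- hence reduced local of positive dimension, with $\m$ not a minimal prime --- then by prime avoidance $\m$ contains a non-zerodivisor, which by (g$'$) acts surjectively on $\Ext^1_R(k,M)$, forcing $\Ext^1_R(k,M) = 0$, whereupon Theorem~\ref{thm:localinj} gives that $M$ is injective over $R$ (and if $R$ is a field there is nothing to prove).
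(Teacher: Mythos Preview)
Your proof is correct, and your handling of the implication (g) $\Rightarrow$ (a) is genuinely different from the paper's.  The paper does not argue (g) $\Rightarrow$ (a) directly; instead it fixes an injective cogenerator $E$, sets $(-)^\vee := \Hom_R(-,E)$, and uses Proposition~\ref{pr:duality} to translate conditions (d), (f), (g) on $M$ into conditions (c), (d), (e) of Theorem~\ref{thm:flatred} on $M^\vee$, then invokes Theorem~\ref{thm:flatred} wholesale.  Your route avoids the duality functor entirely: you kill $\Ext^1_R(R/P,M)$ for non-minimal $P$ by finding a non-zerodivisor in $P$, bootstrap to $\Ext^1_R(R/I,M)=0$ for all $I$ not contained in any minimal prime via a prime filtration, and then dispatch the minimal primes $\q_i$ through the Mayer--Vietoris-type sequence $0\to R\to R/\q_i\oplus R/J\to R/(\q_i+J)\to 0$ (with $J=\bigcap_{j\ne i}\q_j$ and $\q_i\cap J=\sqrt{0}=0$ by reducedness).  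This is more elementary and self-contained --- it uses only Baer's criterion and the finiteness of $\Min R$ --- whereas the paper's approach is more conceptual, making the flat/injective duality explicit and reusing Theorem~\ref{thm:flatred} rather than reproving an analogue.  One small remark: your minimal-prime argument implicitly assumes there are at least two minimal primes (so that $J$ is a proper ideal); when $R$ is a domain the unique minimal prime is $(0)$ and $\Ext^1_R(R,M)=0$ trivially, so you may want to note this edge case.
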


\begin{proof}
Note first that conditions (a), (c) and (e) are equivalent by Theorem~\ref{thm:injgeneral}.

Next, we will show first that (a), (b), (d), (f), (g), (f$'$) and (g$'$) are equivalent:

The fact that (a) $\implies$ (b) follows from Theorem~\ref{thm:CoassHom}.

To see that (b) $\implies$ (d), let $L$ be a torsion-free $R$-module.  Then we have \[
\Coass \Hom_R(L,M) \subseteq \Ass_R L \subseteq \Ass R,
\]
from which it follows that $\bigcup \Coass \Hom_R(L,M) \subseteq \bigcup \Ass R = $ the set of zerodivisors of $R$. But then by Proposition~\ref{pr:Coass}(3), it follows that every non-zerodivisor of $R$ acts surjectively on $\Hom_R(L,M)$.  That is, $\Hom_R(L,M)$ is divisible.

Now let $E := \bigoplus_{\m \in \Max(R)} E_R(R/\m)$, and let $(-)^\vee := \Hom_R(-,E)$ be the associated duality functor.  By Proposition~\ref{pr:duality}(2), $M$ is injective if and only if $M^\vee$ is flat.  For any finite $R$-module $L$, Proposition~\ref{pr:duality}(4) implies that $\Hom_R(L,M)^\vee \cong L \otimes_R M^\vee$ and $\Ext^1_R(L,M)^\vee \cong \Tor_1^R(L, M^\vee)$.  Thus, the equivalence of conditions (a), (d), (f), and (g) follows from Theorem~\ref{thm:flatred} and Proposition~\ref{pr:duality}.  The implication (f$'$) $\implies$ (g$'$) follows from the fact that $\Ext^1_R(k,M)$ is a homomorphic image of $\Hom_R(\m,M)$. Moreover, it is clear that (f) $\implies$ (f$'$) and (g) $\implies$ (g$'$). 

To see that (g$'$) $\implies$ (a) under these conditions, suppose (g$'$) holds.  If $R$ is a field, then $M$ is clearly injective.  Otherwise, let $x\in \m$ be a non-zerodivisor.  Then $x$ acts surjectively on $E := \Ext^1_R(k,M)$ (since $E$ is $R$-divisible), but also $x$ kills $E$ (since $E$ is a $k$-module), so $E=0$.  The conclusion then follows from Theorem~\ref{thm:localinj}.

Finally, we show the equivalence of conditions (i), (ii), and (iii).  To see that (i) $\implies$ (ii), let $\p \in \Ass L$.  Then there is an injection $0 \ra R/\p \ra L$, which when we apply $\Hom_R(-,M)$, yields an exact sequence \[
\Hom_R(L,M) \ra (0 :_M \p) \ra 0.
\]
Now, $\Coass (0 :_M \p) \subseteq \Coass \Hom_R(L,M)$ by Proposition~\ref{pr:Coass}(1), so it suffices to show that $\p \in \Coass (0 :_M \p)$.  We have $(0 :_M\p) = \Hom_R(R/\p, M) \neq 0$ since $M$ is an injective cogenerator.  But then by Proposition~\ref{pr:Coass}(6), $\Coass (0:_M\p) \neq \emptyset$, so that by Theorem~\ref{thm:CoassHom}(1), it follows that $\p \in \Coass (0:_M\p)$.

The implication (ii) $\implies$ (iii) holds because (ii) $\implies$ (b) $\iff$ (a).

Finally, we show that (iii) $\implies$ (i), we need to show that for any nonzero $R$-module $N$, $\Hom_R(N,M) \neq 0$.  The module $N$ contains a cyclic module $C$, which maps onto a simple module $L=R/\m$ for some maximal ideal $\m$.  By assumption, $\Coass \Hom_R(L,M) = \Ass L = \{\m\}$, which certainly implies $\Hom_R(L,M) \neq 0$.  But $\Hom_R(L,M)$ is a submodule of $\Hom_R(C,M)$, and since $M$ injective, $\Hom_R(C,M)$ is a quotient module of $\Hom_R(N,M)$.  Hence, $\Hom_R(N,M) \neq 0$.\end{proof}

\begin{rmk*}
There is, of course, the temptation to adapt the proof of Theorem~\ref{thm:injred} in order to add more equivalent conditions to Theorem~\ref{thm:injgeneral}.  This is indeed possible.  For example, when $R$, $Q$, and $M$ are as in Theorem~\ref{thm:injgeneral}, then $M$ is injective if and only if ``$Q \otimes_R M^\vee$ is flat over $Q$ and $\Hom_R(L,M)$ is divisible for every finitely generated torsion free $R$-module $L$.''  However, such sets of conditions seem too unwieldy to merit theorem status, especially since, as far as the authors know, all such condition sets require explicit use of  the duality functor.  One might hope, for instance, that the flatness of $Q \otimes M^{\vee}$ would be equivalent to the injectivity of $\Hom_R(Q,M)$.  However, one almost never has $\Hom_R(Q, M)^\vee \cong Q \otimes_R M^\vee$, so such equivalences seem unlikely.
\end{rmk*}

%\bibliographystyle{amsalpha}
%\bibliography{abbrev,rsch_refs}
\providecommand{\bysame}{\leavevmode\hbox to3em{\hrulefill}\thinspace}
\providecommand{\MR}{\relax\ifhmode\unskip\space\fi MR }
% \MRhref is called by the amsart/book/proc definition of \MR.
\providecommand{\MRhref}[2]{%
  \href{http://www.ams.org/mathscinet-getitem?mr=#1}{#2}
}
\providecommand{\href}[2]{#2}

\end{document}